\documentclass[11pt,letterpaper,leqno]{article}
\usepackage[T1]{fontenc}
\usepackage[utf8]{inputenc}
\usepackage{lmodern}
\usepackage[margin=1in]{geometry}
\usepackage{amssymb,mathtools,bm,amsthm}
\usepackage{booktabs,longtable,array,enumitem}
\usepackage{graphicx,placeins,microtype,needspace}
\usepackage[hidelinks]{hyperref}
\allowdisplaybreaks[2]
\setlist{itemsep=2pt,topsep=4pt}
\newtheorem{theorem}{Theorem}[section]
\newtheorem{lemma}[theorem]{Lemma}
\newtheorem{corollary}[theorem]{Corollary}
\newtheorem{proposition}[theorem]{Proposition}

\theoremstyle{remark}

\newtheorem{example}[theorem]{Example}
\newcommand{\email}[1]{\href{mailto:#1}{\nolinkurl{#1}}}
\newenvironment{keywords}{\par\smallskip\noindent\textbf{Keywords.}\ }{\par\smallskip}
\newenvironment{MSCcodes}{\par\smallskip\noindent\textbf{MSC 2020.}\ }{\par\smallskip}

\numberwithin{equation}{section}

\title{Strong Duality Thresholds and Minimum Curvature for the Nearest Rectangular Symplectic Matrix Problem}
\author{Yulin Xie\thanks{State Key Laboratory of Mathematical Sciences,
Academy of Mathematics and Systems Science, Chinese Academy of Sciences,
Beijing 100190, China; and School of Mathematical Sciences,
University of Chinese Academy of Sciences, Beijing 100049, China.
Yulin Xie (\email{xieyulin@amss.ac.cn}); Yifa Tang, corresponding author
(\email{tyf@lsec.cc.ac.cn}).}
\and Aiqing Zhu\thanks{Department of Mathematics, National University of Singapore,
10 Lower Kent Ridge Road, Singapore 119076 (\email{zaq@nus.edu.sg}).}
\and Yifa Tang\footnotemark[1]}
\hypersetup{pdftitle={Strong Duality Thresholds and Minimum Curvature for the Nearest Rectangular Symplectic Matrix Problem},pdfauthor={Yulin Xie, Aiqing Zhu, and Yifa Tang}}
\date{}
\begin{document}
\maketitle
\begin{abstract}
We study the nearest rectangular symplectic matrix problem in the squared Frobenius norm for arbitrary targets $T\in\mathbb R^{2n\times2k}$, with $n\ge k$.
We prove that its semidefinite relaxation is exact for every target if and only if $n\ge2k-1$.
Under this condition, the primal problem, the relaxation, and both Lagrangian dual problems attain the same optimal value.
In this range, we derive the exact minimum Rayleigh quotient of the Riemannian Hessian at every feasible point under the Euclidean induced metric, and prove that all second-order stationary points are global minimizers.
We also obtain global objective error bounds.
We extend the strong-duality and minimum-curvature results to Hermitian positive definite row weights, proving that strong duality holds for every target if and only if the lowest eigenvalue of the weight has complex multiplicity at least $2k-1$.
The dual formulation leads to a method whose main iterations use matrices of order $2k$, with $O(k^2)$ iterative storage apart from the full input and output.
Across six tested sizes, mean-time speedups are $2.55$--$12.60$ over a Riemannian first-order method and $4.61$--$43.93$ over Jensen--Zimmermann R--TR2; batching gives a further factor of $3.45$--$23.53$ over sequential calls.
The theoretical results complete the critical dimensions not covered by Gram convexification and quadratic matrix programming.
\end{abstract}
\begin{keywords}
matrix nearness, symplectic Stiefel manifold, semidefinite relaxation, strong duality, Riemannian Hessian
\end{keywords}
\begin{MSCcodes}
15A60, 90C22, 90C26, 90C46
\end{MSCcodes}

\section{Introduction}\label{sec:introduction}
Let $n\ge k\ge1$, write $m=2k$, and define the standard symplectic matrix by
\[
 J_{2r}=\begin{bmatrix}0&I_r\\-I_r&0\end{bmatrix}.
\]
Given $T\in\mathbb R^{2n\times m}$, we consider
\[
 \min\{\|X-T\|_F^2:X\in\mathbb R^{2n\times m},\quad
 X^TJ_{2n}X=J_m\}.
\]
The constraint requires the linear map $X$ to preserve the symplectic form.
The target $T$ is arbitrary and may be rank deficient.
Although the feasible set is noncompact, the squared-distance objective always attains its minimum.
The main questions are when a convex relaxation preserves this minimum and when second-order stationarity implies global optimality.

One motivation for the square problem comes from continuous-variable quantum control.
For quadratic Hamiltonians, the homogeneous action of a quantum gate on the canonical observables is represented by a real symplectic matrix.
Wu--Chakrabarti--Rabitz \cite{WuChakrabartiRabitz2008} measure gate error by the squared Frobenius distance to a prescribed symplectic transformation.
For a symplectic target on the square symplectic group, they prove that the target is the unique local minimizer and all other critical points are saddles \cite{WuChakrabartiRabitz2010}.
Fiori \cite{Fiori2011} recalls this result under the same target assumption and studies means of symplectic matrices.
These landscape results do not cover arbitrary targets in the rectangular problem.

In the complex representation, eliminating a factor with a fixed Gram matrix yields a convex Bures--Wasserstein type objective \cite{BhatiaJainLim2019}.
The rank constraint on the Gram matrix is automatic when $n\ge2k$, so Gram convexification directly gives an exact relaxation in this range.
Beck's quadratic matrix programming theory gives a further sufficient condition for exactness through rank reduction \cite{Beck2007,Beck2009}.
At the critical dimension $n=2k-1$, this condition covers $k=1,2$, but does not directly handle $k\ge3$.

On the rectangular symplectic Stiefel manifold, Son--Absil--Gao--Stykel \cite{SonAbsilGaoStykel2021} prove that every local minimizer of $\operatorname{tr}(X^TMX)$, with $M\succ0$, is global.
Williamson's decomposition $S^TMS=\operatorname{diag}(D,D)$ and the symplectic change of variables $X=SY$ put this model in the zero-target case of our weighted problem, with $H=D$ and $B=0$.
The rank-deficient second-order stationary point criterion in Burer--Monteiro theory \cite{BurerMonteiro2003,BoumalVoroninskiBandeira2020} cannot be applied directly here, because the augmented factor $[U^*;I_n]$ always has full column rank.
Gao and coauthors develop Riemannian optimization methods and Euclidean geometry on the symplectic Stiefel manifold \cite{GaoSonAbsilStykel2021,GaoEuclidean2021}; Jensen and Zimmermann develop trust-region methods using second-order information under the quotient metric induced by a right-invariant metric on the symplectic group \cite{JensenZimmermann2024}. We compare with the former authors' nonmonotone first-order method and the latter authors' R--TR2 variant in section~\ref{sec:numerical}.

We prove that the semidefinite relaxation is exact for every target if and only if $n\ge2k-1$.
Under this condition, the primal problem, the semidefinite relaxation, and both Lagrangian dual problems attain the same optimal value.
For every $k\le n\le2k-2$, we use a rank argument to construct a strict duality gap.
Continuity of the optimal values shows that the gap persists under sufficiently small target perturbations, so the dimension condition cannot be weakened.

For $n\ge2k-1$, we obtain the exact minimum Rayleigh quotient of the Riemannian Hessian at every feasible point under the Euclidean induced metric.
It follows that every second-order stationary point is a global minimizer.
We also derive global objective error bounds in all admissible dimensions, controlling the objective gap by the gradient residual and the negative part of the multiplier matrix $I_m+i\Omega_B(U)$.

We extend the strong-duality result to Hermitian positive definite row weights.
Strong duality holds for every target if and only if the lowest eigenvalue of the weight has complex multiplicity at least $2k-1$.
For the identity weight, this multiplicity is $n$, which recovers the unweighted dimension condition.

For computation, we solve the Lagrangian dual by a Newton method with preconditioned conjugate gradients.
When $n$ is much larger than $k$, the dual formulation reduces the matrix sizes and storage needed for the main iterative state.
It also facilitates batched GPU solves for independent targets.
We compare complete solve times with the Riemannian first-order method and R--TR2 at a common accuracy, as well as sequential and batched calls of the same dual method on the same targets.
Across the six tested sizes, the ratios of mean solve times are $2.55$--$12.60$ for RSD over Dual, $4.61$--$43.93$ for R--TR2 over Dual, and $3.45$--$23.53$ for sequential over batched calls of Dual.

The rest of the paper is organized as follows.
Section~\ref{sec:prelim} relates the primal problem, its semidefinite relaxation, and the two Lagrangian duals.
Section~\ref{sec:curvature} studies the geometry under the Euclidean induced metric and derives the minimum-curvature formula.
Section~\ref{sec:threshold} proves the exact dimension criterion, global optimality of second-order stationary points, and objective error bounds.
Section~\ref{sec:weighted} extends the results to Hermitian positive definite row weights.
Section~\ref{sec:numerical} presents the dual method and numerical experiments.
The final section summarizes the results.
\label{end:introduction}

\Needspace{5\baselineskip}
\section{The primal problem, semidefinite relaxation, and two duals}\label{sec:prelim}\label{sec:convex}
\subsection{Real and complex representations and constraint regularity}
Fix $n\ge k\ge1$, $m=2k$, and $J_{2r}=\left[\begin{smallmatrix}0&I_r\\-I_r&0\end{smallmatrix}\right]$.
Partition the real variable and target by rows as $X=[Q^T,P^T]^T$ and $T=[T_q^T,T_p^T]^T$, and set
$U=Q+iP$ and $B=T_q+iT_p$.
This correspondence preserves the real Frobenius inner product
$\langle V,W\rangle_{\mathbb R}=\operatorname{Re}\operatorname{tr}(V^*W)$ and satisfies
$\operatorname{Im}(U^*U)=X^TJ_{2n}X$. Thus the primal problem
\begin{equation}\label{eq:real-problem}
 \min_{X\in\mathbb R^{2n\times m}}\|X-T\|_F^2,
 \qquad \text{s.t.}\quad X^TJ_{2n}X=J_m
\end{equation}
is equivalent to
\begin{equation}\label{eq:complex-problem}
 F_n(B)=\min_{U\in\mathcal M_{n,k}}f_B(U),\qquad
 f_B(U)=\|U-B\|_F^2,\qquad
 \mathcal M_{n,k}=\{U:\operatorname{Im}(U^*U)=J_m\}.
\end{equation}
Here $U,B\in\mathbb C^{n\times m}$, $\operatorname{Im}$ is taken entrywise, and positive semidefiniteness is understood in the Hermitian sense.
We abbreviate $\mathcal M=\mathcal M_{n,k}$ and write
\[
 \Psi(X)=X^TJ_{2n}X-J_m,\qquad
 \operatorname{skew}(m)=\{S\in\mathbb R^{m\times m}:S^T=-S\}.
\]

\begin{lemma}[Regularity and attainment]\label{lem:regularity}
The feasible set is nonempty and closed and is a smooth real embedded manifold of dimension $2nm-m(m-1)/2$.
The constraint differential at every feasible point is surjective onto $\operatorname{skew}(m)$.
For every $B$, the primal minimum is finite and attained.
\end{lemma}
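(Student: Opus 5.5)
We prove the four assertions in order: nonemptiness, closedness, surjectivity of the constraint differential (which also gives the manifold structure), and attainment.

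\emph{Nonemptiness and closedness.} Since $n\ge k$, the matrix $X_0=\left[\begin{smallmatrix}I_k&0\\0&0\\0&I_k\\0&0\end{smallmatrix}\right]$ selects the coordinates $q_1,\dots,q_k,p_1,\dots,p_k$. It satisfies $X_0^TJ_{2n}X_0=J_m$, so the feasible set is nonempty. Closedness holds because the feasible set is $\Psi^{-1}(0)$ for the continuous map $\Psi$.

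\emph{Surjectivity of the differential.} Note that $\Psi$ takes values in $\operatorname{skew}(m)$, and its differential is
\[
 D\Psi(X)[V]=V^TJ_{2n}X+X^TJ_{2n}V=S(V)-S(V)^T,\qquad S(V):=X^TJ_{2n}V .
\]
Fix $X$ feasible and $\Omega\in\operatorname{skew}(m)$. We try $V=XJ_mW$ with $W$ to be chosen. Using $X^TJ_{2n}X=J_m$ and $J_m^2=-I$, we get
\[
 S(V)=J_mJ_mW=-W,
\]
so $D\Psi(X)[V]=-W+W^T$. Taking $W=-\Omega/2$ gives $-W+W^T=\Omega/2+\Omega/2=\Omega$, as required. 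Hence $D\Psi(X)$ is onto $\operatorname{skew}(m)$ at every feasible point.

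\emph{Manifold structure.} By the regular value theorem, $\mathcal M$ is a closed embedded submanifold of $\mathbb R^{2n\times m}$. Its dimension is $2nm-\dim\operatorname{skew}(m)=2nm-m(m-1)/2$.

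\emph{Attainment.} This is the only place where noncompactness matters, and coercivity resolves it. The objective is continuous and satisfies $\|X-T\|_F^2\ge(\|X\|_F-\|T\|_F)^2$. Therefore the sublevel set $\{X\in\mathcal M:\|X-T\|_F^2\le\|X_0-T\|_F^2\}$ is bounded. It is also closed, because $\mathcal M$ is closed, so it is compact and nonempty. A continuous function attains its minimum on a nonempty compact set, and that minimum is finite. It is the global minimum over $\mathcal M$, since every point outside the sublevel set has a larger objective value.

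By the equivalence of \eqref{eq:real-problem} and \eqref{eq:complex-problem}, all four conclusions transfer to $F_n(B)$.

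The differential computation is routine; the only step needing an idea is the choice $V=XJ_mW$, which turns $D\Psi(X)$ into the map $W\mapsto W^T-W$, whose image is clearly all of $\operatorname{skew}(m)$.
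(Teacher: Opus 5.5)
Your proof is correct and follows essentially the same route as the paper: a standard-symplectic-basis point gives nonemptiness, the explicit preimage $V=-XJ_m\Omega/2$ (the paper's $H=-XJ_mS/2$) gives surjectivity and hence the regular level-set theorem, and closed bounded sublevel sets give attainment. Your inequality $\|X-T\|_F\ge\|X\|_F-\|T\|_F$ simply makes explicit the sublevel-set boundedness that the paper asserts without proof.
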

\begin{proof}
Any $k$ pairs of columns from the standard symplectic basis give a feasible point.
The symplectic constraint implies that $X$ has full column rank.
For any $S\in\operatorname{skew}(m)$, take $H=-XJ_mS/2$. Then
\[
 D\Psi(X)[H]=X^TJ_{2n}H+H^TJ_{2n}X=S.
\]
The regular level-set theorem gives smoothness and the dimension.
Polynomial equalities define a closed feasible set, and the squared-distance objective has bounded nonempty sublevel sets, so the minimum is attained.
\end{proof}

\subsection{The primal Lagrangian dual}
Identify the multiplier space with $\operatorname{skew}(m)$ using the Frobenius inner product
$\langle A,C\rangle_F=\operatorname{tr}(A^TC)$, and define
\begin{equation}\label{eq:lagrangian}
 \mathcal L_B(U,\Omega)=f_B(U)+\langle\Omega,\operatorname{Im}(U^*U)-J_m\rangle_F,
 \qquad K=I_m+i\Omega.
\end{equation}
Expanding gives
\[
 \mathcal L_B(U,\Omega)=\operatorname{tr}(UKU^*)-2\operatorname{Re}\operatorname{tr}(B^*U)
 +\|B\|_F^2-\langle\Omega,J_m\rangle_F.
\]
Define $d_{P,B}(\Omega)=\inf_{U\in\mathbb C^{n\times m}}\mathcal L_B(U,\Omega)$.

\begin{proposition}[Dual function]\label{prop:primal-dual-function}
The value $d_{P,B}(\Omega)$ is finite if and only if
\begin{equation}\label{eq:dual-domain}
 K\succeq0,\qquad B\ker K=\{0\}.
\end{equation}
On this effective domain, set $D=BK^\dagger$. Then
\begin{align}
 d_{P,B}(\Omega)&=\|B\|_F^2-\operatorname{tr}(B^*BK^\dagger)-\langle\Omega,J_m\rangle_F,
 \label{eq:dual-value}\\
 \mathcal L_B(U,\Omega)-d_{P,B}(\Omega)&=\|(U-D)K^{1/2}\|_F^2.
 \label{eq:complete-square}
\end{align}
Elsewhere, $d_{P,B}(\Omega)=-\infty$.
For each fixed $\Omega$ in the effective domain of $d_{P,B}$, the global minimizers of
$\mathcal L_B(\cdot,\Omega)$ over the entire space $\mathbb C^{n\times m}$ are precisely the solutions of $UK=B$.
\end{proposition}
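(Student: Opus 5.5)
The plan is to treat $\mathcal L_B(\cdot,\Omega)$ as a Hermitian quadratic in $U$ and use the expanded form
$\mathcal L_B=\operatorname{tr}(UKU^*)-2\operatorname{Re}\operatorname{tr}(B^*U)+\|B\|_F^2-\langle\Omega,J_m\rangle_F$.
Since $\Omega$ is real skew-symmetric, $i\Omega$ is Hermitian, so $K=I_m+i\Omega$ is Hermitian and has a spectral decomposition $K=V\Lambda V^*$.

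\textbf{Necessity.} First suppose $K$ has an eigenvector $v$ with $Kv=\lambda v$ and $\lambda<0$. Take $U=t\,wv^*$ for any fixed $w\ne0$. Then $\operatorname{tr}(UKU^*)=t^2\lambda\|w\|^2\|v\|^2$, while the linear term is $O(t)$, so $\mathcal L_B\to-\infty$. Hence finiteness forces $K\succeq0$. Next suppose $K\succeq0$ and there is $v\in\ker K$ with $Bv\ne0$. Take $U=t\,(Bv)v^*$. The quadratic term vanishes, and the linear term is $-2t\|Bv\|^2\|v\|^2$, which tends to $-\infty$ as $t\to+\infty$. So finiteness also forces $B\ker K=\{0\}$.

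\textbf{Sufficiency and the formulas.} Assume \eqref{eq:dual-domain} and set $D=BK^\dagger$. The key identity is $DK=BK^\dagger K=B$. This holds because $K^\dagger K$ is the orthogonal projector onto $\operatorname{ran}K=(\ker K)^\perp$, and $B$ annihilates $\ker K$, so $B=B\,K^\dagger K$. Then expand
\[
\|(U-D)K^{1/2}\|_F^2=\operatorname{tr}(UKU^*)-2\operatorname{Re}\operatorname{tr}(DKU^*)+\operatorname{tr}(DKD^*).
\]
Here $DKU^*=BU^*$, so the cross term equals $-2\operatorname{Re}\operatorname{tr}(B^*U)$. Also $DKD^*=BK^\dagger KK^\dagger B^*=BK^\dagger B^*$, whose trace is $\operatorname{tr}(B^*BK^\dagger)$. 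Comparing with the expanded Lagrangian gives
\[
\mathcal L_B(U,\Omega)=\|(U-D)K^{1/2}\|_F^2+\|B\|_F^2-\operatorname{tr}(B^*BK^\dagger)-\langle\Omega,J_m\rangle_F .
\]
The infimum of the first term is $0$, attained at $U=D$. This proves \eqref{eq:dual-value} and \eqref{eq:complete-square}, and shows $d_{P,B}=-\infty$ off the domain.

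\textbf{Characterizing the minimizers.} By \eqref{eq:complete-square}, $U$ is a global minimizer iff $(U-D)K^{1/2}=0$. Since $\ker K^{1/2}=\ker K$, this is equivalent to $(U-D)K=0$, i.e.\ $UK=DK=B$.

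The only delicate step is justifying $BK^\dagger K=B$ from the kernel condition, together with the identity $K^\dagger KK^\dagger=K^\dagger$. Both are standard pseudoinverse facts for Hermitian positive semidefinite $K$.
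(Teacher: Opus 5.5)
Your proof is correct and follows the paper's argument essentially step for step: the same rank-one test directions $U=t\,wv^*$ and $U=t\,(Bv)v^*$ for necessity, the identity $DK=BK^\dagger K=B$ from the kernel condition, completion of the square, and $\ker K^{1/2}=\ker K$ for the characterization of minimizers. One harmless slip: with $U=t\,(Bv)v^*$ the linear term is $-2t\|Bv\|^2$, without the extra factor $\|v\|^2$; the two agree for unit $v$, and the conclusion is unaffected.
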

\begin{proof}
If $Kv=\lambda v$ with $\lambda<0$, scaling $U=tzv^*$ for $z\ne0$ makes the quadratic term tend to negative infinity.
If $K\succeq0$ but $Bv\ne0$ for some $v\in\ker K$, choose $U=t(Bv)v^*$; the linear term is $-2t\|Bv\|^2$ and gives the same conclusion.
Conversely, the range condition gives $DK=B$. Completing the square proves both identities, and the squared term vanishes if and only if $(U-D)K=0$.
\end{proof}
Weak duality gives $0=d_{P,B}(0)\le\sup_{\Omega\in\operatorname{skew}(m)}d_{P,B}(\Omega)\le F_n(B)$.

\subsection{Semidefinite relaxation}
Set $G=U^*U$. Then the objective becomes
\[
 \|U-B\|_F^2
 =\operatorname{tr}G-2\operatorname{Re}\operatorname{tr}(B^*U)+\|B\|_F^2,
\]
and the symplectic constraint becomes $\operatorname{Im}G=J_m$.
The primal problem is therefore equivalent to
\[
 \begin{aligned}
 F_n(B)=\min_{U,\;G=G^*}\quad&
 \operatorname{tr}G-2\operatorname{Re}\operatorname{tr}(B^*U)+\|B\|_F^2\\
 \text{s.t.}\quad&\operatorname{Im}G=J_m,\qquad G=U^*U.
 \end{aligned}
\]
Relaxing $G=U^*U$ to $G-U^*U\succeq0$ gives the semidefinite relaxation
\begin{equation}\label{eq:sdp}
 \begin{aligned}
 R_n(B)=\min_{U,\;G=G^*}\quad&
 \rho_B(G,U):=\operatorname{tr}G-2\operatorname{Re}\operatorname{tr}(B^*U)+\|B\|_F^2\\
 \text{s.t.}\quad&\operatorname{Im}G=J_m,\qquad G-U^*U\succeq0.
 \end{aligned}
\end{equation}
By the Schur complement, the last constraint is equivalent to the linear matrix inequality
\[
 \begin{bmatrix}G&U^*\\U&I_n\end{bmatrix}\succeq0.
\]
The choice $U=0$ and $G=tI_m+iJ_m$, with $t>1$, is strictly feasible, so Slater's condition holds.
Moreover, nonempty sublevel sets are closed and bounded, so the relaxation attains its minimum.

\subsection{Pointwise equality of the two dual functions}
For the relaxation, dualize the equality constraint while retaining positive semidefiniteness in the variable domain:
\begin{align}
 \mathcal L_{R,B}(G,U,\Omega)&=\operatorname{tr}(GK)-2\operatorname{Re}\operatorname{tr}(B^*U)
 +\|B\|_F^2-\langle\Omega,J_m\rangle_F,\label{eq:convex-lagrangian}\\
 d_{R,B}(\Omega)&=\inf_{G\succeq U^*U}\mathcal L_{R,B}(G,U,\Omega).
 \label{eq:relax-dual-definition}
\end{align}
\begin{proposition}[The common dual and the relaxation value]\label{prop:dual}
For every $\Omega\in\operatorname{skew}(m)$, $d_{P,B}(\Omega)=d_{R,B}(\Omega)$,
including the case where both values are $-\infty$.
Denote the common function by $d_B$ and set
$D_n(B)=\sup_{\Omega\in\operatorname{skew}(m)}d_B(\Omega)$. For every $n\ge k$ and every $B$,
\begin{equation}\label{eq:relax-dual}
 D_n(B)=R_n(B),
\end{equation}
and an optimal multiplier attaining $D_n(B)$ exists.
\end{proposition}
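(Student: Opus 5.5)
Two things have to be shown: the pointwise identity $d_{P,B}=d_{R,B}$, and the equality $D_n(B)=R_n(B)$ together with attainment of the dual supremum.

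\textbf{Step 1: the pointwise identity.} I will compute $d_{R,B}(\Omega)$ directly, splitting into cases according to the domain \eqref{eq:dual-domain}. First write $G=U^*U+W$ with $W\succeq0$. Then
\[
\mathcal L_{R,B}(G,U,\Omega)=\mathcal L_B(U,\Omega)+\operatorname{tr}(WK).
\]

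\emph{Case $K\succeq0$.} Here $\operatorname{tr}(WK)\ge0$, with equality at $W=0$. Taking the infimum over $W$ gives $\inf_{W}\mathcal L_{R,B}=\mathcal L_B(U,\Omega)$. Minimizing over $U$ then yields $d_{R,B}(\Omega)=d_{P,B}(\Omega)$. This holds whether or not $B\ker K=\{0\}$, since both sides reduce to the same infimum.

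\emph{Case $K$ has a negative eigenvalue.} Take $Kv=\lambda v$ with $\lambda<0$ and set $W=tvv^*$. As $t\to\infty$, $\operatorname{tr}(WK)=t\lambda\|v\|^2\to-\infty$, so $d_{R,B}(\Omega)=-\infty$. Proposition~\ref{prop:primal-dual-function} gives $d_{P,B}(\Omega)=-\infty$ in this case as well.

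This establishes the pointwise identity, including the case where both values are $-\infty$.

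\textbf{Step 2: $D_n(B)=R_n(B)$ and attainment.} This follows from standard conic Lagrangian duality. The relaxation \eqref{eq:sdp} is a convex problem: it has a linear objective, the linear equality constraint $\operatorname{Im}G=J_m$, and the closed convex cone constraint given by the Schur-complement LMI. The function $d_{R,B}$ is exactly the Lagrangian dual obtained by dualizing only the equality constraint.

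Slater's condition was verified above, using the point $U=0$, $G=tI_m+iJ_m$. This point lies in the interior of the conic constraint set and satisfies the equality constraint. Since $R_n(B)$ is finite (the minimum is attained), the standard strong duality theorem for convex programs with affine equality constraints and a Slater point applies. It gives $\sup_\Omega d_{R,B}(\Omega)=R_n(B)$, and the supremum is attained by some $\Omega^\star\in\operatorname{skew}(m)$.

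Two details need checking. First, the equality map $G\mapsto\operatorname{Im}G$ maps Hermitian matrices onto $\operatorname{skew}(m)$, so the multiplier space is exactly $\operatorname{skew}(m)$. Second, the pairing $\langle\Omega,\operatorname{Im}G-J_m\rangle_F$ matches \eqref{eq:convex-lagrangian}. For the second, I will use $\operatorname{tr}(GK)=\operatorname{tr}G+\langle\Omega,\operatorname{Im}G\rangle_F$ for Hermitian $G$ and real skew $\Omega$. This follows from $\operatorname{tr}(iG\Omega)=-\operatorname{tr}(\Omega^T\operatorname{Im}G)$ up to the sign convention, and I will verify it carefully.

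\textbf{Main obstacle.} The main subtlety is the version of the duality theorem to invoke. Only the equality constraint is dualized, while the cone constraint stays in the domain. The cleanest route is the Rockafellar-type result: for $\min\{\phi(x): Ax=b,\ x\in C\}$ with $C$ convex, if there exists $x$ in the relative interior of $C$ with $Ax=b$ and the value is finite, then there is no duality gap and a dual optimal multiplier exists. The Slater point above lies in the interior of $\{G\succeq U^*U\}$, since $tI_m+iJ_m\succ0$ for $t>1$, so this theorem applies directly.
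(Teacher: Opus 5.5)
Your proposal is correct and follows the paper's proof essentially step for step: the substitution $G=U^*U+W$, the case split on whether $K\succeq0$ (with $U$ held fixed, e.g.\ $U=0$, while $W=tvv^*$ is scaled), and Slater's theorem for the partial Lagrangian in which only $\operatorname{Im}G=J_m$ is dualized. Your additional checks are correct and make explicit what the paper leaves implicit: the pairing identity $\operatorname{tr}(GK)=\operatorname{tr}G+\langle\Omega,\operatorname{Im}G\rangle_F$, and the use of the relative-interior form of the duality theorem because the cone constraint stays in the domain.
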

\begin{proof}
Writing $G=U^*U+S$, $S\succeq0$, gives
\[
 \mathcal L_{R,B}(U^*U+S,U,\Omega)=\mathcal L_B(U,\Omega)+\operatorname{tr}(SK).
\]
If $K\not\succeq0$, Proposition~\ref{prop:primal-dual-function} gives $d_{P,B}(\Omega)=-\infty$.
Fixing $U=0$ and scaling $S$ in a negative eigendirection also gives $d_{R,B}(\Omega)=-\infty$.
If $K\succeq0$, the added term has minimum zero at $S=0$ for each $U$.
The dual functions are therefore equal, including when the range condition fails and the infimum is negative infinity.
Strict feasibility and finiteness of the relaxation allow Slater's theorem \cite[\S5.2.3]{BoydVandenberghe2004} to establish~\eqref{eq:relax-dual} and dual attainment.
\end{proof}

\subsection{Optimality conditions and recovery}
\begin{proposition}[The relaxation--dual gap identity]\label{prop:sdp-equality}
Let $\Omega$ satisfy~\eqref{eq:dual-domain}, and set $D=BK^\dagger$.
For every relaxation feasible point $(G,U)$,
\begin{equation}\label{eq:sdp-gap}
 \rho_B(G,U)-d_B(\Omega)
 =\|(U-D)K^{1/2}\|_F^2+\operatorname{tr}\bigl(K(G-U^*U)\bigr)\ge0.
\end{equation}
The relaxation--dual gap is zero if and only if
\begin{equation}\label{eq:complementarity}
 UK=B,\qquad \operatorname{range}(G-U^*U)\subseteq\ker K.
\end{equation}
Thus a relaxation feasible point and a dual feasible multiplier are both optimal if and only if they satisfy~\eqref{eq:complementarity}.
In particular, $U\in\mathcal M$, $K\succeq0$, and $UK=B$ are sufficient for primal global optimality and zero gap.
\end{proposition}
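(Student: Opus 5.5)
The identity \eqref{eq:sdp-gap} follows from the relaxed Lagrangian \eqref{eq:convex-lagrangian} together with the completed square \eqref{eq:complete-square}. The plan is to compute $\rho_B(G,U)-d_B(\Omega)$ for a relaxation feasible $(G,U)$ and then read off the zero-gap conditions from the two nonnegative terms.

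First, feasibility gives $\operatorname{Im}G=J_m$, so $\langle\Omega,\operatorname{Im}G-J_m\rangle_F=0$. Since $G$ is Hermitian, $\operatorname{tr}(GK)=\operatorname{tr}G+\operatorname{tr}(G\,i\Omega)$. Writing $G=\operatorname{Re}G+i\operatorname{Im}G$ with $\operatorname{Re}G$ symmetric and $\Omega$ skew, the real-symmetric part pairs to zero against $\Omega$. Hence $\operatorname{tr}(iG\Omega)=\langle\Omega,\operatorname{Im}G\rangle_F$ up to the sign convention already used in expanding \eqref{eq:lagrangian}. This yields $\rho_B(G,U)=\mathcal L_{R,B}(G,U,\Omega)$. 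Next I write $G=U^*U+S$ with $S=G-U^*U\succeq0$, exactly as in the proof of Proposition~\ref{prop:dual}. That gives $\mathcal L_{R,B}=\mathcal L_B(U,\Omega)+\operatorname{tr}(SK)$. Applying \eqref{eq:complete-square} and the equality $d_B=d_{P,B}$ from Proposition~\ref{prop:dual} then produces \eqref{eq:sdp-gap}. Nonnegativity holds because both terms are nonnegative: the first is a squared norm, and $\operatorname{tr}(SK)\ge0$ since $S,K\succeq0$.

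For the equivalence, the gap vanishes if and only if both terms vanish. The first term vanishes if and only if $(U-D)K^{1/2}=0$, that is, $(U-D)K=0$, which is $UK=DK=B$ by the range condition in \eqref{eq:dual-domain}, exactly as in Proposition~\ref{prop:primal-dual-function}. For the second term I use the standard fact that for $S,K\succeq0$, $\operatorname{tr}(SK)=\|K^{1/2}S^{1/2}\|_F^2=0$ if and only if $KS^{1/2}=0$, if and only if $KS=0$, if and only if $\operatorname{range}S\subseteq\ker K$. This gives \eqref{eq:complementarity}.

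The optimality statement follows by combining \eqref{eq:sdp-gap} with weak duality $d_B(\Omega)\le D_n(B)=R_n(B)\le\rho_B(G,U)$. A zero gap forces both inequalities to be equalities, so both points are optimal. Conversely, if both are optimal, then $\rho_B=R_n(B)=D_n(B)=d_B(\Omega)$ by \eqref{eq:relax-dual}, and the gap is zero. For the final claim, if $U\in\mathcal M$, I take $G=U^*U$; this is relaxation feasible with $S=0$, so the second condition holds trivially. Then $UK=B$ gives zero gap, so $f_B(U)=d_B(\Omega)\le F_n(B)\le f_B(U)$, which is global optimality with all values equal.

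The only delicate point is the sign bookkeeping in $\operatorname{tr}(GK)$ versus $\langle\Omega,\operatorname{Im}G\rangle_F$. I would settle it by reusing the expansion of $\mathcal L_B$ already stated after \eqref{eq:lagrangian}, applied with $U^*U$ replaced by a general Hermitian $G$.
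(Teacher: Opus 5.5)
Your proposal is correct and follows the paper's proof essentially step for step. The steps are: the Lagrangian equals the objective at relaxation feasible points, the split $G=U^*U+S$, the completed square \eqref{eq:complete-square}, the identity $\operatorname{tr}(KS)=\|K^{1/2}S^{1/2}\|_F^2$, and optimality via \eqref{eq:relax-dual} and weak duality. Your sign check in fact comes out exactly as $i\operatorname{tr}(G\Omega)=\langle\Omega,\operatorname{Im}G\rangle_F$, so no hedging is needed. The paper also records one point you skip in the final claim: $UK=B$ with $K\succeq0$ already gives $B\ker K=\{0\}$, so \eqref{eq:dual-domain} holds automatically there rather than as an extra assumption.
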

\begin{proof}
At a relaxation feasible point the Lagrangian equals the objective, so~\eqref{eq:complete-square} gives the identity.
For $S=G-U^*U\succeq0$, we have
$\operatorname{tr}(KS)=\|K^{1/2}S^{1/2}\|_F^2$.
Both terms vanish precisely under~\eqref{eq:complementarity}.
The equivalence with optimality follows from~\eqref{eq:relax-dual} and weak duality.
Finally, $UK=B$ implies $B\ker K=0$; if $U\in\mathcal M$, setting $G=U^*U$ gives the primal sufficient condition.
\end{proof}

\begin{proposition}[Recovery from an optimal multiplier]\label{prop:dual-recovery}
Given any dual optimal multiplier $\Omega_*$, set $K=I_m+i\Omega_*$.
If $K\succ0$, strong duality holds for this target, and the unique primal optimizer is $BK^{-1}$.
If $K$ is singular and strong duality holds, let the columns of $N$ be an orthonormal basis of $\ker K$.
Then the primal optimal set is exactly the nonempty set
\begin{equation}\label{eq:general-kernel-recovery}
 \{D+YN^*:Y\in\mathbb C^{n\times s},\quad D+YN^*\in\mathcal M\},
 \qquad D=BK^\dagger,\quad s=\dim_{\mathbb C}\ker K.
\end{equation}
\end{proposition}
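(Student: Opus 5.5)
The whole argument runs through the gap identity of Proposition~\ref{prop:sdp-equality}, applied to the embedded primal points $(G,U)=(U^*U,U)$ with $U\in\mathcal M$. For such points the second term of~\eqref{eq:sdp-gap} vanishes, so
\[
 f_B(U)-d_B(\Omega_*)=\|(U-D)K^{1/2}\|_F^2 .
\]
Since $\Omega_*$ is optimal, Proposition~\ref{prop:dual} gives $d_B(\Omega_*)=D_n(B)=R_n(B)\le F_n(B)$. A dual optimal multiplier lies in the effective domain~\eqref{eq:dual-domain}, so $K\succeq0$, $B\ker K=\{0\}$, and $D=BK^\dagger$ is well defined with $DK=B$.

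\emph{Case $K\succ0$.} Here $K^{1/2}$ is invertible, so the identity above becomes $f_B(U)=d_B(\Omega_*)+\|(U-BK^{-1})K^{1/2}\|_F^2$. I first show that $BK^{-1}$ is feasible. Complementarity~\eqref{eq:complementarity} should hold at a relaxation optimum, which exists by the attainment noted after~\eqref{eq:sdp}. Pairing that optimum $(G_*,U_*)$ with $\Omega_*$ gives zero relaxation--dual gap by~\eqref{eq:relax-dual}. Then~\eqref{eq:complementarity} gives $U_*K=B$, so $U_*=BK^{-1}$, and $\operatorname{range}(G_*-U_*^*U_*)\subseteq\ker K=\{0\}$, so $G_*=U_*^*U_*$. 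Hence $\operatorname{Im}(U_*^*U_*)=\operatorname{Im}G_*=J_m$, that is, $U_*\in\mathcal M$, and $f_B(U_*)=\rho_B(G_*,U_*)=R_n(B)=d_B(\Omega_*)$. Therefore $F_n(B)\le d_B(\Omega_*)\le F_n(B)$, which is strong duality. Uniqueness follows from the displayed identity: any feasible $U$ with $f_B(U)=F_n(B)$ satisfies $(U-BK^{-1})K^{1/2}=0$, hence $U=BK^{-1}$.

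\emph{Case $K$ singular with strong duality.} Now $F_n(B)=d_B(\Omega_*)$, so $U\in\mathcal M$ is optimal if and only if $(U-D)K^{1/2}=0$. That is equivalent to $(U-D)K=0$, i.e.\ the rows of $U-D$ annihilate $\operatorname{range}K$. Since $K$ is Hermitian, $\operatorname{range}K=(\ker K)^\perp$, so the condition says $U-D=YN^*$ for some $Y\in\mathbb C^{n\times s}$. Concretely, take $Y=(U-D)N$ and use $NN^*+KK^\dagger=I_m$; the converse uses $N^*K=0$. This gives exactly the set~\eqref{eq:general-kernel-recovery}. It is nonempty because Lemma~\ref{lem:regularity} guarantees that the primal minimum is attained.

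\emph{Main obstacle.} The delicate point is justifying feasibility of $BK^{-1}$ in the definite case. This step must go through the existence of a relaxation minimizer together with complementarity, not through primal attainment directly. It is also important not to mistake the kernel characterization for a statement about relaxation optima: in the singular case, strong duality has to be assumed for the reduction to hold.
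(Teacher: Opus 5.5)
Your proof is correct and follows essentially the same route as the paper. In the definite case you pair a relaxation minimizer with $\Omega_*$ and use complementarity to force $G_*=U_*^*U_*$ and $U_*=BK^{-1}$; in the singular case you identify the primal optimizers with the feasible solutions of $UK=B$, written as $D+YN^*$, and you make explicit two points the paper leaves implicit: uniqueness uses the just-established strong duality through the completed square, and nonemptiness comes from primal attainment.
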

\begin{proof}
If $K\succ0$, complementarity at a relaxation optimizer forces $G=U^*U$.
Thus $U$ is primal feasible and satisfies $UK=B$, or $U=BK^{-1}$.
Every primal optimizer must satisfy the same equation, proving uniqueness.
If $K$ is singular, all solutions of $UK=B$ have the form $D+YN^*$.
Strong duality and primal attainment ensure a symplectically feasible solution;
Proposition~\ref{prop:sdp-equality} identifies these feasible solutions as exactly the primal optimizers.
\end{proof}
\label{end:preliminaries}

\section{Normal projection multipliers and exact minimum curvature}\label{sec:curvature}
Equip $\mathcal M=\mathcal M_{n,k}$ with the Euclidean metric induced by the real Frobenius inner product.
At a fixed feasible point $U$, the minimum curvature is the smallest Hessian Rayleigh quotient over its tangent space:
\begin{equation}\label{eq:min-curvature-definition}
 \lambda_{\min}\bigl(\operatorname{Hess}f_B(U)\bigr)
 =\min_{0\ne Z\in T_U\mathcal M}
 \frac{\langle Z,\operatorname{Hess}f_B(U)[Z]\rangle_{\mathbb R}}
      {\|Z\|_F^2}.
\end{equation}

\subsection{Normal projection, gradient, and Hessian}
The constraint map $\psi(U)=\operatorname{Im}(U^*U)-J_m$ has differential
$D\psi(U)[Z]=\operatorname{Im}(U^*Z+Z^*U)$ and adjoint
$D\psi(U)^*[A]=2iUA$ for $A\in\operatorname{skew}(m)$.
By Lemma~\ref{lem:regularity}, its tangent and normal spaces are
\begin{align}
 T_U\mathcal M&=\{Z:\operatorname{Im}(U^*Z+Z^*U)=0\},\label{eq:tangent}\\
 N_U\mathcal M&=\{iUA:A\in\operatorname{skew}(m)\}.
 \label{eq:normal}
\end{align}

\begin{proposition}[Normal projection multiplier]\label{prop:normal-multiplier}
For any $B\in\mathbb C^{n\times m}$ and $U\in\mathcal M$,
the orthogonal normal projection of $\nabla f_B(U)=2(U-B)$ uniquely determines a real skew-symmetric multiplier $\Omega_B(U)$ through
\begin{equation}\label{eq:normal-gradient-projection}
 P_{N_U\mathcal M}\bigl[\nabla f_B(U)\bigr]=-2iU\Omega_B(U).
\end{equation}
Set $S=\operatorname{Re}(U^*U)=X^TX\succ0$ and $Q_B=U^*B$.
Then $\Omega=\Omega_B(U)$ is the unique real skew-symmetric solution of the Sylvester equation
\begin{equation}\label{eq:sylvester}
 S\Omega+\Omega S=\operatorname{Im}(Q_B+Q_B^*)-2J_m
\end{equation}
and depends smoothly on $U$.
\end{proposition}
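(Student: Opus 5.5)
The plan is to characterize the normal component of the gradient by orthogonality, reduce that condition to the Sylvester equation~\eqref{eq:sylvester}, and then get uniqueness and smoothness from $S\succ0$.

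\emph{Existence of $\Omega$.} By~\eqref{eq:normal} every normal vector has the form $iUA$ with $A\in\operatorname{skew}(m)$. The map $A\mapsto iUA$ is injective, since $U$ has full column rank by Lemma~\ref{lem:regularity}. Hence the normal projection of $\nabla f_B(U)=2(U-B)$ can be written uniquely as $-2iU\Omega$ for some real skew $\Omega$.

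\emph{Reduction to~\eqref{eq:sylvester}.} The vector $-2iU\Omega$ is this projection exactly when $2(U-B)+2iU\Omega$ is orthogonal to every $iUA$, $A\in\operatorname{skew}(m)$. In the real inner product this condition reads
\[
\operatorname{Re}\operatorname{tr}\bigl((iUA)^*(U-B+iU\Omega)\bigr)=0
\quad\text{for all }A\in\operatorname{skew}(m).
\]
Expanding gives
\[
\operatorname{Re}\operatorname{tr}\bigl(-iA^T(U^*U-U^*B)+A^TU^*U\Omega\bigr)=0 .
\]
Each piece is computed with $U^*U=S+iJ_m$ (feasibility gives $\operatorname{Im}(U^*U)=J_m$) and with $A$ real:
\begin{itemize}
\item $\operatorname{Re}\operatorname{tr}(-iA^TU^*U)=\operatorname{tr}(A^TJ_m)$;
\item $\operatorname{Re}\operatorname{tr}(iA^TQ_B)=-\operatorname{tr}(A^T\operatorname{Im}Q_B)$;
\item $\operatorname{Re}\operatorname{tr}(A^TU^*U\Omega)=\operatorname{tr}(A^TS\Omega)$.
\end{itemize}
So the condition is that $J_m-\operatorname{Im}Q_B+S\Omega$ is orthogonal to $\operatorname{skew}(m)$, i.e.\ its skew part vanishes. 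The skew part of $S\Omega$ is $(S\Omega+\Omega S)/2$, because $(S\Omega)^T=-\Omega S$. The skew part of $\operatorname{Im}Q_B$ is $(\operatorname{Im}Q_B-(\operatorname{Im}Q_B)^T)/2=\operatorname{Im}(Q_B+Q_B^*)/2$, using $\operatorname{Im}(Q_B^*)=-(\operatorname{Im}Q_B)^T$. Finally $J_m$ is already skew. Multiplying by $2$ yields exactly~\eqref{eq:sylvester}.

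\emph{Uniqueness and smoothness.} We have $S=X^TX\succ0$ by full column rank. The Sylvester operator $\Omega\mapsto S\Omega+\Omega S$ has eigenvalues $\sigma_i+\sigma_j>0$, so it is invertible on all real $m\times m$ matrices. It maps $\operatorname{skew}(m)$ into itself, and the right-hand side of~\eqref{eq:sylvester} is skew, so the unique solution is skew and coincides with the projection multiplier. The solution depends smoothly on $U$: the inverse of an invertible linear operator depends smoothly on its coefficients (entries of $S$), and $S$ and the right-hand side are polynomial in $U$.

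The only delicate step is the sign and transpose bookkeeping when taking the skew part of $\operatorname{Im}Q_B$; the rest is routine.
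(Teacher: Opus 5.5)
Your proof is correct and follows essentially the paper's route: requiring $2(U-B)+2iU\Omega$ to be orthogonal to every $iUA$ is the same condition as the paper's substitution into the tangent condition, and uniqueness and skew-symmetry then follow from $S\succ0$ and the skew right-hand side exactly as in the paper. One point of precision: $A\mapsto iUA$ is injective on real $A$ because $UA=0$ is equivalent to $XA=0$ and $X$ has full real column rank; it is not because $U$ has full complex column rank, which fails whenever $n<2k$.
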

\begin{proof}
Surjectivity of $D\psi(U)$ makes its adjoint injective, so the coefficient of the normal projection is unique.
Substituting $2(U-B)+2iU\Omega$ into the tangent condition and using $U^*U=S+iJ_m$ gives~\eqref{eq:sylvester}.
Since $S\succ0$, the spectra of $S$ and $-S$ are disjoint; the Sylvester equation therefore has a unique solution \cite[Chapter~16]{Higham2002}.
Transposition and skew-symmetry of the right-hand side show that the solution is skew-symmetric.
The coefficients depend smoothly on $U$ and the associated operator is everywhere invertible, so the solution also depends smoothly on $U$.
\end{proof}
In the real representation $g_X=2(X-T)-J_{2n}X\Lambda$, we have $\Lambda=2\Omega_B(U)$.

Let $\Pi_U$ denote orthogonal tangent projection, and set $K(U)=I_m+i\Omega_B(U)$.
\begin{proposition}[Gradient and Hessian]\label{prop:hessian}
For any target $B$, feasible point $U$, and $Z\in T_U\mathcal M$,
\begin{equation}\label{eq:gradient}
 g(U)=\operatorname{grad}f_B(U)=2\bigl(UK(U)-B\bigr),
\end{equation}
\begin{equation}\label{eq:hessian}
 \operatorname{Hess}f_B(U)[Z]=\Pi_U(2ZK(U)),\qquad
 \langle Z,\operatorname{Hess}f_B(U)[Z]\rangle_{\mathbb R}
 =2\operatorname{tr}(ZK(U)Z^*).
\end{equation}
\end{proposition}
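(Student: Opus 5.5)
The plan is to work with the standard formulas for a Riemannian submanifold of a Euclidean space. For an embedded submanifold with the induced metric, the Riemannian gradient is $\Pi_U\nabla f_B(U)$, and the Riemannian Hessian is $\operatorname{Hess}f_B(U)[Z]=\Pi_U\bigl(D\bar g(U)[Z]\bigr)$, where $\bar g$ is any smooth extension of the gradient field to a neighbourhood of $\mathcal M$. For the gradient, Proposition~\ref{prop:normal-multiplier} gives the normal component of $\nabla f_B(U)=2(U-B)$ as $-2iU\Omega_B(U)$. Subtracting it leaves the tangent part
\[
 g(U)=2(U-B)+2iU\Omega_B(U)=2\bigl(U(I_m+i\Omega_B(U))-B\bigr)=2\bigl(UK(U)-B\bigr),
\]
which is~\eqref{eq:gradient}.

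For the Hessian, I will use the extension $\bar g(V)=2\bigl(V(I_m+i\Omega(V))-B\bigr)$. Here $\Omega(V)$ is the solution of the Sylvester equation~\eqref{eq:sylvester} with $S=\operatorname{Re}(V^*V)$ and $Q_B=V^*B$. Near $\mathcal M$ we still have $S\succ0$, so $\Omega$ is smooth in a neighbourhood by the argument of Proposition~\ref{prop:normal-multiplier}. Differentiating in the direction $Z\in T_U\mathcal M$ gives
\[
 D\bar g(U)[Z]=2ZK(U)+2iU\,D\Omega(U)[Z].
\]
The second term has the form $iUA$ with $A=2D\Omega(U)[Z]$ real skew-symmetric, because $\Omega$ is skew-valued. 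By~\eqref{eq:normal} this term lies in $N_U\mathcal M$, so $\Pi_U$ annihilates it. Hence $\operatorname{Hess}f_B(U)[Z]=\Pi_U(2ZK(U))$.

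This is the step I expect to need the most care: the Hessian formula must not depend on the chosen extension. That is standard: the tangential projection of the ambient derivative of the gradient field along $\mathcal M$ equals the Levi-Civita covariant derivative. Equivalently, one can use the Weingarten-map formula $\operatorname{Hess}f=\Pi_U(\nabla^2 f[Z])+\mathfrak W_U(Z,P_N\nabla f)$. A direct check with it gives the same result. Here $\nabla^2 f[Z]=2Z$, and the curvature term produces $2iZ\Omega$ after projection.

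For the Rayleigh quotient, take $Z$ tangent. Since $\Pi_U$ is the orthogonal projection and $Z$ is already tangent, $\langle Z,\Pi_U(2ZK)\rangle_{\mathbb R}=\langle Z,2ZK\rangle_{\mathbb R}=2\operatorname{Re}\operatorname{tr}(Z^*ZK)$. The matrix $K=I_m+i\Omega$ is Hermitian, because $\Omega$ is real skew-symmetric. So $\operatorname{tr}(Z^*ZK)=\operatorname{tr}(ZKZ^*)$ is real, and the quadratic form equals $2\operatorname{tr}(ZK(U)Z^*)$, completing~\eqref{eq:hessian}.
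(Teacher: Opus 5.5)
Your proposal is correct and follows the paper's proof essentially step for step. You subtract the normal projection to obtain the gradient, differentiate $2(UK-B)$, note that $2iU\,D\Omega_B(U)[Z]$ is normal because the derivative of $\Omega_B$ is real skew-symmetric, and apply the projected-derivative formula together with $\Pi_U Z=Z$ and the Hermitian $K$. The only difference is cosmetic: you differentiate a smooth ambient extension defined through the Sylvester equation, whereas the paper differentiates along a feasible curve; both give the same tangential derivative, and your Weingarten cross-check is consistent but not needed.
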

\begin{proof}
Subtracting the normal projection from the ambient gradient gives~\eqref{eq:gradient}.
Differentiating along a feasible curve yields
\[
 Dg(U)[Z]=2ZK(U)+2iU\,D\Omega_B(U)[Z].
\]
The last term is normal because $D\Omega_B(U)[Z]$ is real skew-symmetric.
The projected-derivative formula for Euclidean embedded manifolds \cite[\S4.9]{EdelmanAriasSmith1998}
gives the Hessian, and $\Pi_U Z=Z$ gives its quadratic form.
\end{proof}

For any Hermitian $K$, with $\lambda=\lambda_{\min}(K)$,
\begin{equation}\label{eq:curvature-ambient-bound}
 \operatorname{tr}(ZKZ^*)-\lambda\|Z\|_F^2
 =\bigl\|Z(K-\lambda I_m)^{1/2}\bigr\|_F^2\ge0,
\end{equation}
with equality if and only if
\begin{equation}\label{eq:curvature-equality-condition}
 Z(K-\lambda I_m)=0\quad\Longleftrightarrow\quad ZK=\lambda Z.
\end{equation}
Thus~\eqref{eq:hessian} gives the minimum-curvature lower bound $2\lambda_{\min}(K(U))$.
We next construct a rank-one direction satisfying both this equality condition and the symplectic tangent condition.

\subsection{Minimum curvature and an attaining direction}
\begin{theorem}[Exact minimum curvature]\label{thm:curvature}
\label{thm:exact-curvature}\label{lem:tangent-rayleigh}
Suppose $n\ge2k-1=m-1$.
For every target $B\in\mathbb C^{n\times m}$ and every feasible point $U$,
the minimum over tangent vectors in~\eqref{eq:min-curvature-definition} is
\begin{equation}\label{eq:exact-curvature}
 \lambda_{\min}\bigl(\operatorname{Hess}f_B(U)\bigr)
 =2\lambda_{\min}(K(U))
 =2\bigl(1-\|\Omega_B(U)\|_2\bigr).
\end{equation}
A tangent vector of complex rank one attains this minimum and is a Hessian eigenvector for the smallest eigenvalue.
More generally, for every real skew-symmetric $\Omega$, with $K=I_m+i\Omega$,
there exists a tangent vector $Z\in T_U\mathcal M$ of complex rank one such that
\begin{equation}\label{eq:rank-one-attainment}
 ZK=\lambda_{\min}(K)Z,\qquad \lambda_{\min}(K)=1-\|\Omega\|_2.
\end{equation}
\end{theorem}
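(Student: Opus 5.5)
The plan is to reduce everything to the second, more general claim~\eqref{eq:rank-one-attainment}. Once that is established, apply it with $\Omega=\Omega_B(U)$. The ambient bound~\eqref{eq:curvature-ambient-bound} together with~\eqref{eq:hessian} shows that $2\lambda_{\min}(K(U))$ is a lower bound for the Rayleigh quotient. A tangent $Z$ with $ZK=\lambda Z$ attains it by the equality case~\eqref{eq:curvature-equality-condition}. Moreover $\operatorname{Hess}f_B(U)[Z]=\Pi_U(2ZK)=2\lambda\,\Pi_U Z=2\lambda Z$, so $Z$ is an eigenvector. For the spectral formula: $\Omega$ is real skew-symmetric, so $i\Omega$ is Hermitian with spectrum $\{\pm\sigma_j\}$, where $\sigma_j$ are the singular values of $\Omega$. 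Hence $\lambda_{\min}(K)=1-\|\Omega\|_2$.

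For the construction, fix a unit eigenvector $v$ with $Kv=\lambda v$ and look for $Z=zv^*$ with $z\in\mathbb C^n\setminus\{0\}$. Then $ZK=zv^*K=\lambda zv^*$ automatically, since $K$ is Hermitian. Setting $w=U^*z$, the tangent condition~\eqref{eq:tangent} becomes
\[
\operatorname{Im}(wv^*+vw^*)=0.
\]
Let $W\subseteq\mathbb C^m$ be the real-linear subspace of $w$ satisfying this. The key observation is that $W$ is large. First, $w=c\bar v$ lies in $W$ for every $c\in\mathbb C$, because $c\bar v v^*+\bar c v v^T$ equals its own complex conjugate. Second, $w=iav$ lies in $W$ for every $a\in\mathbb R$, because then $wv^*+vw^*=0$.

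When $\|\Omega\|_2>0$, $\bar v$ is an eigenvector of $i\Omega$ for the eigenvalue $+\|\Omega\|_2$, while $v$ belongs to $-\|\Omega\|_2$. So $v,\bar v$ are $\mathbb C$-independent and $\dim_{\mathbb R}W\ge3$. When $\Omega=0$, $K=I$ and we are free to choose $v$ real. Then every real $w$ lies in $W$ (the imaginary part is $\operatorname{Im}(w)v^T-v\operatorname{Im}(w)^T$), so again $\dim_{\mathbb R}W\ge m+1\ge3$.

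Now use the dimension hypothesis, splitting into two cases.
\begin{itemize}
\item If $\ker U^*\ne\{0\}$, take any nonzero $z\in\ker U^*$. Then $w=0\in W$ and $Z=zv^*\neq0$ is tangent.
\item Otherwise $U^*$ is injective, so $\operatorname{rank}_{\mathbb C}U=n\ge m-1$. Then $\operatorname{range}(U^*)$ is a complex subspace of real codimension $2(m-\operatorname{rank}U)\le2$ in $\mathbb C^m\cong\mathbb R^{2m}$. Since $\dim_{\mathbb R}W\ge3>2$, there is a nonzero $w\in W\cap\operatorname{range}(U^*)$. Writing $w=U^*z$ gives $z\ne0$, hence a nonzero tangent $Z=zv^*$ of complex rank one.
\end{itemize}

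The main obstacle is exactly this intersection count: a naive count treats the tangent condition as $m(m-1)/2$ independent real equations in $2n$ unknowns, which fails badly. The proof only works because of the special three-dimensional family in $W$ combined with the rank bound from $n\ge m-1$. I would check carefully that the two families genuinely span a real $3$-dimensional space. This holds because $i\,\mathbb R v\cap\mathbb C\bar v=0$ whenever $v,\bar v$ are complex-independent, and the $\Omega=0$ case is handled separately as above.
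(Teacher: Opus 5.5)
Your proof is correct and is essentially the paper's argument. The paper also uses $Z=zv^*$ with $v$ a bottom eigenvector of $K$, and its conditions $E^*z=0$ and $\operatorname{Re}\bigl((u_1+iu_2)^*z\bigr)=0$ amount precisely to $U^*z\in\mathbb C\bar v\oplus i\mathbb R v$, the three-real-dimensional family you exhibit in $W$; the only difference is bookkeeping. The paper obtains a nonzero $z$ uniformly from $\dim_{\mathbb C}\ker E^*\ge n-(m-2)\ge1$ plus a phase rotation, whereas you use a real-dimension intersection count in $w=U^*z$ with a separate case for $\ker U^*\ne\{0\}$, which is equally valid.
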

\begin{proof}
Fix $\Omega$ and write $\lambda=\lambda_{\min}(I_m+i\Omega)$ and $\omega=\|\Omega\|_2$.
The purely imaginary eigenvalues of a real skew-symmetric matrix occur in conjugate pairs, so $\lambda=1-\omega$.
To construct a rank-one attaining direction, first choose a corresponding unit eigenvector $v$.
If $\omega>0$, then
\[
 \Omega v=i\omega v,\qquad 0=v^T\Omega v=i\omega v^Tv,
\]
so $v^Tv=0$ and hence
\begin{equation}\label{eq:curvature-eigenvector-coordinates}
 v=\frac{a+ib}{\sqrt2},\qquad
 a,b\in\mathbb R^m,\quad \|a\|=\|b\|=1,\quad a^Tb=0.
\end{equation}
If $\omega=0$, then $K=I_m$; choose any real orthonormal pair $a,b$ and define $v$ by the same formula.

Set $Z=zv^*$, where $z\in\mathbb C^n$ is to be chosen. We show that a nonzero $z$ can be selected so that $Z$ is tangent; this form already gives $ZK=\lambda Z$.
Complete $a,b$ to a real orthogonal matrix $O=[a,b,O_r]$ and write
\[
 UO=[u_1,u_2,E],\qquad ZO=[z,-iz,0]/\sqrt2.
\]
Since $E$ has $m-2$ columns, $\dim_{\mathbb C}\ker E^*\ge n-(m-2)\ge1$.
Choose a nonzero $z_0\in\ker E^*$ and a phase $\theta$ so that $z=e^{i\theta}z_0$ satisfies
\begin{equation}\label{eq:attaining-z}
 E^*z=0,\qquad \operatorname{Re}\bigl((u_1+i u_2)^*z\bigr)=0.
\end{equation}
Such a phase always exists: rotate $(u_1+i u_2)^*z_0$ onto the imaginary axis if it is nonzero, and choose any phase otherwise.

In these coordinates, $E^*z=0$ eliminates the cross blocks, leaving
\[
 \begin{gathered}
 \operatorname{Im}\bigl((UO)^*(ZO)+(ZO)^*(UO)\bigr)
 =\frac1{\sqrt2}
 \begin{bmatrix}0&-c&0\\c&0&0\\0&0&0_{m-2}\end{bmatrix},\\
 c=\operatorname{Re}\bigl((u_1+i u_2)^*z\bigr)=0.
 \end{gathered}
\]
Real orthogonal changes of columns preserve the homogeneous tangent condition, so $Z\in T_U\mathcal M$.
When $m=2$, $E$ is empty and the same argument applies.
Finally, take $\Omega=\Omega_B(U)$. The identity
$\operatorname{Hess}f_B(U)[Z]=\Pi_U(2ZK)=2\lambda Z$
proves the minimum-curvature and eigenvector assertions.
\end{proof}
\label{end:curvature}

\section{The sharp dimension threshold for universal strong duality}\label{sec:threshold}

\begin{theorem}[Dimension criterion for strong duality for every target]\label{thm:threshold}\label{subsec:threshold-statement}
Fix \(n\ge k\ge1\) and let \(m=2k\). The following are equivalent:
\begin{enumerate}
 \item For every \(B\in\mathbb C^{n\times m}\),
 \[
  F_n(B)=R_n(B)=D_n(B);
 \]
 \item \(n\ge m-1\).
\end{enumerate}
Moreover, for every \(k\ge2\) and \(k\le n\le m-2\), there is a nonempty open set of targets,
each with full complex row rank and a real representation of full column rank, such that
\[
 F_n(B)>R_n(B)=D_n(B).
\]
\end{theorem}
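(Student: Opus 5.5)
The plan is to treat the two directions separately. The equality $R_n(B)=D_n(B)$ holds for every $n\ge k$ and every $B$ by Proposition~\ref{prop:dual}, so throughout only $F_n(B)$ versus $R_n(B)$ needs to be decided.

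\emph{Sufficiency ($n\ge m-1$).} I will argue through second-order stationarity.
1. By Lemma~\ref{lem:regularity}, the primal minimum is attained at some $U\in\mathcal M$.
2. Since $\mathcal M$ is an embedded manifold, a global minimizer satisfies the first- and second-order necessary conditions: $\operatorname{grad}f_B(U)=0$ and $\operatorname{Hess}f_B(U)\succeq0$ on $T_U\mathcal M$.
3. By \eqref{eq:gradient}, the first condition reads $UK(U)=B$ with $K(U)=I_m+i\Omega_B(U)$.
4. Theorem~\ref{thm:curvature} applies because $n\ge m-1$. It gives $0\le\lambda_{\min}(\operatorname{Hess}f_B(U))=2\lambda_{\min}(K(U))$, so $K(U)\succeq0$.
5. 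Then $\Omega_B(U)$ lies in the dual domain \eqref{eq:dual-domain}, since $UK=B$ forces $B\ker K=0$. The last sentence of Proposition~\ref{prop:sdp-equality} now gives a zero gap with $G=U^*U$, hence $F_n(B)=d_B(\Omega_B(U))\le D_n(B)=R_n(B)\le F_n(B)$.

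This settles (2)$\Rightarrow$(1) with no further computation.

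\emph{Necessity and the open set ($k\le n\le m-2$, so $k\ge2$).} I will build one explicit target with a strict gap and then perturb it.
1. Pick a multiplier $\Omega_0$ with $\|\Omega_0\|_2=1$, so that $K_0=I_m+i\Omega_0\succeq0$ is singular with kernel basis $N$ of dimension $s$. Candidates are $\Omega_0=J_m$, with $s=k$, or a rank-two modification with $s=1$.
2. Choose a relaxation-feasible pair $(G_0,U_0)$ satisfying the complementarity conditions \eqref{eq:complementarity}, with $\operatorname{range}(G_0-U_0^*U_0)\subseteq\ker K_0$ nonzero. Set $B=U_0K_0$. By Proposition~\ref{prop:sdp-equality}, the pair $(G_0,U_0)$ is relaxation optimal and $\Omega_0$ is dual optimal.
3. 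If strong duality held, Proposition~\ref{prop:dual-recovery} would produce a primal optimizer $U=D+YN^*\in\mathcal M$ with $D=BK_0^\dagger$. Its Gram matrix would satisfy $K_0^{1/2}U^*UK_0^{1/2}=K_0^{1/2}D^*DK_0^{1/2}$, so it would agree with $G_0$ off $\ker K_0$.
4. The rank argument is to compare dimensions. $U^*U=\operatorname{Re}(U^*U)+iJ_m$ has rank at most $n\le m-2$. The block of $U^*U$ on $(\ker K_0)^\perp$ is fixed by $D$, and I will design $D$ so that this fixed block plus the forced imaginary part $J_m$ cannot be completed to a positive semidefinite matrix of rank $\le n$.
5. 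Concretely, I would try making $D$ of full row rank $n$ and arranging that every completion has rank at least $n+1$. This exploits that $m-n\ge2$ exceeds what the $s$ free kernel directions can absorb.

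I expect step 4 to be the main obstacle. It requires a concrete choice of $\Omega_0$, $U_0$ and $G_0$ for which the completion obstruction can be verified directly, and I would first try $s=1$ with $U_0$ built from standard symplectic columns, to keep the kernel bookkeeping minimal.

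\emph{Openness.} $F_n$ is continuous in $B$, being a minimum of squared distance to a fixed closed set. $R_n=D_n$ is continuous too: it is the optimal value of a strictly feasible convex program with bounded sublevel sets, and equivalently a supremum of dual functions, which gives the lower and upper semicontinuity needed. Hence $F_n-R_n>0$ persists on a neighborhood of the constructed $B$. Full complex row rank of $B$ and full column rank of its real representation are open conditions. If the constructed $B$ lacks them, I would first perturb slightly within the gap region, which is legitimate because the gap region is open and these rank conditions are dense. Intersecting then gives the required nonempty open set, and in particular the failure of (1) whenever $n\le m-2$.
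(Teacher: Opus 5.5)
Your sufficiency argument is the paper's. Your openness argument is also sound. $R_n$ is upper semicontinuous as a minimum over a $B$-independent feasible set. It is lower semicontinuous as the supremum of $d_B(\Omega)$ over $\|\Omega\|_2<1$, which is legitimate by concavity, and where these functions are continuous in $B$. The paper instead uses $R_n(B)=F_{n+m}([B;0])$.

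The genuine gap is necessity: you never produce a target with $F_n(B)>R_n(B)$. Steps 4--5 describe a completion-rank obstruction you hope to arrange, but no $\Omega_0$, $U_0$, $G_0$ is ever fixed and nothing is verified. So neither the ``only if'' direction nor the ``moreover'' clause is proved, and these are the substantive part of the theorem.

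The missing idea is to confine $\ker K_0$ to a single symplectic pair. In pair order, take $\Omega_0=\operatorname{diag}(J_2,0_{m-2})$, so $\ker K_0$ is spanned by $(1,i,0,\dots,0)^T$. Every solution of $UK_0=B$ is then $U=[d+h,\,i(d-h),\,E]$, where $E$ is the last $m-2$ columns of $B$ and $h\in\mathbb C^n$ is the only freedom. The cross constraints between the first pair and the columns of $E$ become complex-linear, $E^*h=\overline{E^*d}$. The pair constraint becomes $\|d\|^2-\|h\|^2=1$. The relaxation, whose slack must lie in $\ker K_0$ (equivalently, extra rows), only needs $\|d\|^2-\|h\|^2\ge1$.

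When $E$ has full row rank $n$, $h$ is pinned down. This is possible precisely because $n\le m-2$; it is the same count that yields a nonzero $z\in\ker E^*$ in Theorem~\ref{thm:curvature} when $n\ge m-1$. A strict inequality $\|d\|^2-\|h\|^2>1$ then gives a relaxation point at the dual value that no primal point can attain. The paper realizes this with $B_0$ for $n=k=2$: $h=e_1$ is forced and $\|d\|^2=5$, so the pair value is $4\neq1$. It then embeds $B_0$ block-diagonally with the feasible blocks $C_1,C_2$ to cover every $k\le n\le m-2$.

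Your step-5 heuristic targets the wrong object. Full row rank of $D=[d,\,i d,\,E]$ is not enough. If $\ker E^*\neq0$, you can move $h$ along $\ker E^*$ until $\|h\|^2=\|d\|^2-1$ whenever the relaxation is tight at that multiplier, and the gap disappears.
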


\subsection{Proof of the main theorem}
With the notation of section~\ref{sec:prelim}, the primal, relaxation, and dual optimal values satisfy
\(D_n(B)=R_n(B)\le F_n(B)\), and all are attained.

\begin{proof}[Sufficiency]\label{subsec:threshold-sufficiency}
Suppose \(n\ge m-1\). By Lemma~\ref{lem:regularity}, the primal problem for every target \(B\) has a global minimizer
\(U_\star\) satisfying the first- and second-order necessary conditions. Set
\(\Omega_\star=\Omega_B(U_\star)\) and \(K_\star=I_m+i\Omega_\star\). The first-order condition and Theorem~\ref{thm:curvature} give
\[
 U_\star K_\star=B,\qquad
 0\le\lambda_{\min}\bigl(\operatorname{Hess}f_B(U_\star)\bigr)
   =2\lambda_{\min}(K_\star).
\]
Thus \(K_\star\succeq0\), and the first-order condition implies \(B\ker K_\star=\{0\}\).
The completed-square identity~\eqref{eq:complete-square} attains equality at \(U_\star\), so
\[
 F_n(B)=d_B(\Omega_\star)\le D_n(B)=R_n(B)\le F_n(B).
\]
This proves sufficiency, including rank-deficient targets and the endpoint \(k=n=1\).
\end{proof}

\medskip\noindent\textbf{Necessity: a two-row, four-column counterexample.}\label{subsec:base-gap}
\par\nopagebreak
Proposition~\ref{prop:dual-recovery} shows that a positive definite optimal $K$ guarantees exactness,
so a strict gap can occur only on the singular dual boundary.
For the counterexamples below, order columns in symplectic pairs so that $J_m=\operatorname{diag}(J_2,\ldots,J_2)$.
The inverse column permutation preserves norms, ranks, and optimal values and restores the standard column order.
Take
\begin{equation}\label{eq:gap-base-data}
 B_0=\begin{bmatrix}2&2i&1&i\\4i&-4&0&1\end{bmatrix},
 \qquad U_0=\begin{bmatrix}2&0&1&i\\2i&-2&0&1\end{bmatrix},
\end{equation}
\begin{equation}\label{eq:gap-base-multiplier}
 \Omega_0=\operatorname{diag}(J_2,0_2),\qquad K_0=I_4+i\Omega_0.
\end{equation}

\begin{lemma}[A strict gap with two rows and four columns]\label{lem:base-gap}
For the target in~\eqref{eq:gap-base-data},
\[
 R_2(B_0)=D_2(B_0)=18<F_2(B_0).
\]
\end{lemma}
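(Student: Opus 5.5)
The plan is to certify $R_2(B_0)=D_2(B_0)=18$ with an explicit primal–dual pair for the relaxation, using the gap identity of Proposition~\ref{prop:sdp-equality}. Then I would rule out equality with $F_2(B_0)$ using the recovery description of Proposition~\ref{prop:dual-recovery}.

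\textbf{Step 1: the multiplier $\Omega_0$.} With $\Omega_0=\operatorname{diag}(J_2,0_2)$ we get $K_0=\operatorname{diag}\bigl(\left[\begin{smallmatrix}1&i\\-i&1\end{smallmatrix}\right],I_2\bigr)$. Its spectrum is $\{0,2,1,1\}$, so $K_0\succeq0$, and $\ker K_0$ is spanned by $\nu=(1,i,0,0)^T/\sqrt2$. A direct check shows $U_0K_0=B_0$: for instance $[2,0]$ maps to $[2,2i]$ and $[2i,-2]$ maps to $[4i,-4]$, while the last two columns are unchanged. Consequently $B_0\ker K_0=U_0K_0\ker K_0=\{0\}$, and $\Omega_0$ lies in the effective domain~\eqref{eq:dual-domain}.

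\textbf{Step 2: a relaxation point with zero gap.} The candidate is $U=U_0$ and $G=U_0^*U_0+c\,\nu\nu^*$ with $c\ge0$.
\begin{itemize}
\item Compute $\operatorname{Im}(U_0^*U_0)$. I expect it to agree with $J_4$ (in paired ordering) except in the leading $2\times2$ block, which should be a multiple of $J_2$ different from $J_2$.
\item Note that $\operatorname{Im}(\nu\nu^*)=\tfrac12\operatorname{diag}(J_2,0)$ up to sign. A suitable $c\ge0$ therefore restores $\operatorname{Im}G=J_4$. The sign of $c$ must be checked; the data were presumably chosen so that it is nonnegative.
\end{itemize}
Then $(G,U_0)$ is relaxation feasible, $UK_0=B_0$, and $\operatorname{range}(G-U^*U)=\operatorname{span}\nu\subseteq\ker K_0$. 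By~\eqref{eq:complementarity}, this point and $\Omega_0$ are both optimal, so $R_2(B_0)=D_2(B_0)=d_{B_0}(\Omega_0)$. The value is then either $\rho_{B_0}(G,U_0)=\operatorname{tr}G-2\operatorname{Re}\operatorname{tr}(B_0^*U_0)+\|B_0\|_F^2$ with $\|B_0\|_F^2=43$, or~\eqref{eq:dual-value} with $\langle\Omega_0,J_4\rangle_F=2$. This routine arithmetic should give $18$.

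\textbf{Step 3: $F_2(B_0)>18$.} Suppose $F_2(B_0)=18$, i.e., strong duality holds. Proposition~\ref{prop:dual-recovery}, applied to the optimal multiplier $\Omega_0$, then says some solution of $UK_0=B_0$ lies in $\mathcal M$. Every such solution has the form $U=U_0+y\nu^*$ with $y\in\mathbb C^2$, since $D+YN^*$ and $U_0$ differ by an element of $\mathbb C^{2}\nu^*$.
\begin{itemize}
\item This perturbation changes only the first two columns, by $y/\sqrt2$ and $-iy/\sqrt2$. Write the columns as $u_1+w$, $u_2-iw$, $u_3$, $u_4$ with $w=y/\sqrt2$.
\item Impose $\operatorname{Im}(U^*U)=J_4$ entrywise. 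The cross conditions with the fixed columns $u_3=(1,0)^T$ and $u_4=(i,1)^T$ are $\operatorname{Im}((u_1+w)^*u_j)=0$ and $\operatorname{Im}((u_2-iw)^*u_j)=0$ for $j=3,4$. These are real-linear constraints on $w$.
\item Together with the quadratic condition $\operatorname{Im}((u_1+w)^*(u_2-iw))=1$, I expect the system to be inconsistent. The likely mechanism is that the linear constraints determine $w$ up to a direction along which the quadratic quantity stays fixed at the wrong value.
\end{itemize}
This contradiction gives $F_2(B_0)\neq18$. Since $F_2\ge R_2$ and the primal minimum is attained by Lemma~\ref{lem:regularity}, the inequality is strict.

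The main obstacle is Step 3, the real-algebraic inconsistency argument. The computations with $\operatorname{Re}$ and $\operatorname{Im}$ parts of four real unknowns must be organized carefully. My first approach would be to solve the four linear cross conditions explicitly, then substitute into the single quadratic condition and show it reduces to a false identity such as $-1=1$ or a negative sum of squares equal to a positive number.
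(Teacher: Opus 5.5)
Your proposal is correct and follows the paper's proof. Your certificate is exactly the paper's: $\operatorname{Im}(U_0^*U_0)=\operatorname{diag}(4J_2,J_2)$, so $c=6$ and $c\,\nu\nu^*=S_0$. Your contradiction is the paper's argument over the affine family of solutions of $UK_0=B_0$, with $h=e_1+w$ in the paper's notation.

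One correction to your expected mechanism in Step~3. Because $u_1,u_2$ already satisfy the cross conditions, those conditions reduce to $\operatorname{Im}(w^*u_j)=\operatorname{Re}(w^*u_j)=0$, that is, $w^*u_3=w^*u_4=0$. Since $u_3,u_4$ span $\mathbb C^2$, this forces $w=0$ outright, with no free direction left. The first pair then gives $\operatorname{Im}(u_1^*u_2)=4\neq1$, which is the contradiction.
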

\begin{proof}
Set
\[
 S_0=3\begin{bmatrix}1&-i&0&0\\i&1&0&0\\0&0&0&0\\0&0&0&0\end{bmatrix},
 \qquad G_0=U_0^*U_0+S_0.
\]
Direct calculation gives $K_0\succeq0$, $B_0\ker K_0=0$, $S_0\succeq0$, and
\begin{equation}\label{eq:gap-base-values}
 \begin{gathered}
 \operatorname{Im}G_0=J_4,\qquad U_0K_0=B_0,\qquad K_0S_0=0,\\
 d_{B_0}(\Omega_0)=43-23-2=18.
 \end{gathered}
\end{equation}
Thus $(G_0,U_0)$ is relaxation feasible and satisfies complementarity~\eqref{eq:complementarity}.
The gap identity gives $R_2(B_0)=D_2(B_0)=18$.

Let $U_\star$ be a primal global minimizer.
If its objective value were also $18$, Proposition~\ref{prop:sdp-equality} would require $U_\star K_0=B_0$, or
\begin{equation}\label{eq:gap-equality-family}
 \begin{gathered}
 U_\star=[d_0+h,i(d_0-h),a,b],\qquad h\in\mathbb C^2,\\
 d_0=\begin{bmatrix}1\\2i\end{bmatrix},\quad
 a=\begin{bmatrix}1\\0\end{bmatrix},\quad
 b=\begin{bmatrix}i\\1\end{bmatrix}.
 \end{gathered}
\end{equation}
Vanishing of the imaginary inner products of the first two columns with $a$ forces $h_1=1$;
comparison with $b$ then gives $h_2=0$.
But the first symplectic pair satisfies
\[
 \operatorname{Im}\bigl((d_0+h)^*i(d_0-h)\bigr)
 =\|d_0\|^2-\|h\|^2=4,
\]
contradicting the required value $1$. Hence $F_2(B_0)>18$.
\end{proof}

\medskip\noindent\textbf{Embedding into every dimension below the threshold.}\label{subsec:gap-embedding}
\par\nopagebreak
\begin{proposition}[Strict gaps in every lower dimension]\label{prop:all-size-gap}
Let $k\ge2$ and $k\le n\le2k-2$, and set
\[
 r=n-k\ge0,\qquad s=2k-n-2\ge0,\qquad r+s=k-2.
\]
Define
\begin{equation}\label{eq:gap-embedding-blocks}
 C_1=[1,i],\qquad
 C_2=\begin{bmatrix}\sqrt2&i/\sqrt2\\0&\sqrt{3/2}\end{bmatrix},
\end{equation}
and, in symplectic-pair column order, let
\begin{equation}\label{eq:gap-general-target}
 B=\operatorname{diag}
 \bigl(B_0,\underbrace{C_1,\ldots,C_1}_{s},
            \underbrace{C_2,\ldots,C_2}_{r}\bigr).
\end{equation}
Then $B\in\mathbb C^{n\times2k}$ has full complex row rank, its real representation has full column rank, and
\[
 F_n(B)>R_n(B)=D_n(B)=18.
\]
\end{proposition}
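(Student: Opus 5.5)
The plan is to reduce everything to Lemma~\ref{lem:base-gap} through the block structure of $B$. First I will check the bookkeeping. The blocks contribute $2+s+2r=n$ rows and $4+2s+2r=2k$ columns, arranged in symplectic-pair column order.

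\textbf{Rank claims.} A block-diagonal matrix has full row rank, respectively $\mathbb R$-linearly independent columns, exactly when each block does. For $B_0$, the two rows are clearly independent. Its four columns in $\mathbb C^2\cong\mathbb R^4$ are $\mathbb R$-independent: I will verify this by writing out the $4\times4$ real matrix and checking that its determinant is nonzero. For $C_1=[1,i]$, both properties are immediate. For $C_2$, the matrix is invertible, and its columns $(\sqrt2,0)^T$ and $(i/\sqrt2,\sqrt{3/2})^T$ are $\mathbb R$-independent. Together these give full complex row rank and full column rank of the real representation.

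\textbf{The relaxation value.} Next I show $R_n(B)=D_n(B)=18$ by exhibiting a complementary pair, as in Lemma~\ref{lem:base-gap}. Each small block is itself symplectically feasible. For $C_1$ one has $\operatorname{Im}(\bar1\cdot i)=1$. For $C_2$ one has $\operatorname{Im}(c_1^*c_2)=\operatorname{Im}(\sqrt2\cdot i/\sqrt2)=1$ and $\operatorname{Im}(c_j^*c_j)=0$. Set
\[
U=\operatorname{diag}(U_0,C_1,\dots,C_1,C_2,\dots,C_2),\qquad
\Omega=\operatorname{diag}(\Omega_0,0,\dots,0),
\]
together with $K=\operatorname{diag}(K_0,I)$ and $G=U^*U+\operatorname{diag}(S_0,0)$. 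Because the row supports of the blocks are disjoint, $U^*U$ is block diagonal. Hence $\operatorname{Im}G=J_m$, $UK=B$, $KS=0$ and $K\succeq0$ all follow blockwise from~\eqref{eq:gap-base-values}. Proposition~\ref{prop:sdp-equality} then gives $R_n(B)=D_n(B)=d_B(\Omega)$. The value $d_B(\Omega)$ equals $18$, since the small blocks contribute $\|C\|_F^2-\operatorname{tr}(C^*C)-0=0$.

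\textbf{Strict gap.} Suppose a primal minimizer $U_\star$ had value $18$. Proposition~\ref{prop:sdp-equality} would then force $U_\star K=B$. Since $K=\operatorname{diag}(K_0,I)$, every column of $U_\star$ after the first pair coincides with the corresponding column of $B$. The first pair has the form $[d_0+h,\,i(d_0-h)]$, where $d_0$ is padded by zeros and $h\in\mathbb C^n$. Split $h=(h_{12},h_R)$, with $h_R$ collecting the rows outside the $B_0$ block.

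Now take any column $c$ supported in a $C_1$ or $C_2$ block. The two symplectic orthogonality conditions are $\operatorname{Im}((d_0+h)^*c)=0$ and $\operatorname{Im}((i(d_0-h))^*c)=-\operatorname{Re}((d_0-h)^*c)=0$. Since $d_0$ vanishes on those rows, they reduce to $\operatorname{Im}(h_R^*c)=\operatorname{Re}(h_R^*c)=0$, that is, $h_R^*c=0$. Each small block has full row rank, so $h_R=0$. The conditions against $a$ and $b$ are exactly those in Lemma~\ref{lem:base-gap}, and they force $h_{12}=(1,0)^T$. Hence $\|h\|^2=1$, and the first pair gives $\|d_0\|^2-\|h\|^2=4\ne1$, a contradiction. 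Therefore $F_n(B)>18$.

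The only genuinely new step is showing that $h$ vanishes outside the $B_0$ rows, which needs both the real and imaginary parts of the orthogonality conditions. The remaining computational check is the determinant for the $\mathbb R$-independence of the columns of $B_0$.
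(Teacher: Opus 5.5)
Your proposal is correct and follows essentially the same route as the paper: you certify $R_n(B)=D_n(B)=18$ with the block-diagonal complementary pair $\bigl(\operatorname{diag}(G_0,C_j^*C_j),\operatorname{diag}(U_0,C_j)\bigr)$ and $K=\operatorname{diag}(K_0,I)$, then use $U_\star K=B$ and the cross constraints to pin down $h$ and reach $\|d_0\|^2-\|h\|^2=4\ne1$. The only cosmetic difference is that the paper determines $h$ in one step, writing $\zeta=h-e_1$ and using the full row rank of the last $m-2$ columns of $B$, whereas you first show $h_R=0$ using the small blocks and then reuse the two-row computation from Lemma~\ref{lem:base-gap}.
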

\begin{proof}
The added blocks satisfy $C_1^*C_1=I_2+iJ_2$ and $C_2^*C_2=2I_2+iJ_2$ and are therefore symplectically feasible.
The row and column counts are $2+s+2r=n$ and $4+2s+2r=m$; zero repetitions mean empty blocks.
Take
\[
 \Omega=\operatorname{diag}(\Omega_0,0_{m-4}),\qquad
 K=\operatorname{diag}(K_0,I_{m-4}).
\]
The norm and pseudoinverse terms of each added block cancel in the dual objective, so $d_B(\Omega)=18$.
Combining $(G_0,U_0)$ and the pairs $(C_j^*C_j,C_j)$ block diagonally gives a relaxation feasible point of value $18$.
Thus $D_n(B)=R_n(B)=18$.

Let $E$ be the last $m-2$ columns of $B$. Then
\[
 E=\operatorname{diag}\left(
 \begin{bmatrix}1&i\\0&1\end{bmatrix},
 \underbrace{C_1,\ldots,C_1}_{s},\underbrace{C_2,\ldots,C_2}_{r}\right)
\]
has full complex row rank.
By Lemma~\ref{lem:regularity}, a primal global minimizer $U_\star$ exists.
Suppose, for contradiction, that $f_B(U_\star)=18$.
The gap identity gives $U_\star K=B$, so
\begin{equation}\label{eq:gap-embedded-family}
 \begin{gathered}
 U_\star=[d_e+h,i(d_e-h),E],\qquad h\in\mathbb C^n,\\
 d_e=(1,2i,0,\ldots,0)^T.
 \end{gathered}
\end{equation}
To determine $h$, take $h_0=e_1$; the first two columns in this form then satisfy all cross constraints with $E$.
Comparing the cross constraints for $h$ and $h_0$, and writing $\zeta=h-h_0$, gives, for every column $f$ of $E$,
\[
 \operatorname{Im}(\zeta^*f)=0,\qquad
 \operatorname{Im}(i\zeta^*f)=\operatorname{Re}(\zeta^*f)=0.
\]
Hence $E^*\zeta=0$, and full row rank implies $h=h_0$.
The imaginary inner product of the first symplectic pair is then $\|d_e\|^2-\|h_0\|^2=4$,
contradicting the required value $1$. This proves $F_n(B)>18$.
The ranks of $B_0$ and the added blocks, together with their disjoint row supports, also give both asserted full-rank properties.
\end{proof}

This proves necessity in Theorem~\ref{thm:threshold}.
Factoring the slack as $G-U^*U=V^*V$ and adding $m$ rows, or conversely splitting off these rows, gives
\begin{equation}\label{eq:extra-rows}
 R_n(B)=F_{n+m}\left(\begin{bmatrix}B\\0_{m\times m}\end{bmatrix}\right).
\end{equation}
This augmented representation and continuity of the distance function show that both optimal values depend continuously on the target.
The strict gap therefore persists under sufficiently small perturbations, as do the full-rank properties, which are open conditions.
These counterexamples also show that the dimension condition in the exact minimum-curvature formula cannot be weakened.

\subsection{Globality of second-order stationary points and objective error bounds}\label{sec:landscape}
A feasible point is second-order stationary if \(\operatorname{grad}f_B(U)=0\) and \(\operatorname{Hess}f_B(U)\succeq0\).

\begin{corollary}[Globality of second-order stationary points]\label{cor:sosp}\label{thm:sosp-global}
If \(n\ge2k-1\), then for every target \(B\), all second-order stationary points are global minimizers.
Every nonglobal first-order stationary point admits a strictly negative curvature direction given by the rank-one construction in Theorem~\ref{thm:curvature}.
\end{corollary}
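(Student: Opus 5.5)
The plan is to combine the first-order condition with Theorem~\ref{thm:curvature} and the sufficient condition in Proposition~\ref{prop:sdp-equality}.

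\emph{Globality.} Let $U$ be a second-order stationary point, and set $\Omega=\Omega_B(U)$ and $K=K(U)=I_m+i\Omega$. By Proposition~\ref{prop:hessian}, $\operatorname{grad}f_B(U)=0$ is exactly $UK=B$. Since $n\ge2k-1$, Theorem~\ref{thm:curvature} gives
\[
 0\le\lambda_{\min}\bigl(\operatorname{Hess}f_B(U)\bigr)=2\lambda_{\min}(K),
\]
so $K\succeq0$. We now have $U\in\mathcal M$, $K\succeq0$ and $UK=B$. The last clause of Proposition~\ref{prop:sdp-equality} then shows that $U$ is a primal global minimizer, with $f_B(U)=d_B(\Omega)=F_n(B)$.

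\emph{Negative curvature.} Let $U$ be first-order stationary but not global. The argument above shows that $K\succeq0$ would force $U$ to be global. Hence $\lambda_{\min}(K)<0$, or equivalently $\|\Omega_B(U)\|_2>1$. Theorem~\ref{thm:curvature}, applied with $\Omega=\Omega_B(U)$, gives a complex rank-one tangent vector $Z=zv^*$ with $ZK=\lambda_{\min}(K)Z$. By~\eqref{eq:hessian},
\[
 \langle Z,\operatorname{Hess}f_B(U)[Z]\rangle_{\mathbb R}=2\operatorname{tr}(ZKZ^*)=2\lambda_{\min}(K)\|Z\|_F^2<0,
\]
so $Z$ is a strictly negative curvature direction, and it is a Hessian eigenvector.

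There is no real obstacle here. The only points to check are that first-order stationarity coincides with $UK(U)=B$ for the normal-projection multiplier, which Proposition~\ref{prop:hessian} states directly, and that Proposition~\ref{prop:sdp-equality} needs only $K\succeq0$ and $UK=B$ (the range condition $B\ker K=0$ follows automatically). Both hold as stated.
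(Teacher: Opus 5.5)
Your proposal is correct and follows the paper's proof: stationarity gives $UK=B$, Theorem~\ref{thm:curvature} turns $\operatorname{Hess}f_B(U)\succeq0$ into $K\succeq0$, and at a nonglobal stationary point $\lambda_{\min}(K)<0$, so the rank-one attaining direction has strictly negative curvature. The only difference is cosmetic. You conclude globality through the sufficient condition in Proposition~\ref{prop:sdp-equality}, while the paper applies the completed square~\eqref{eq:complete-square} directly to get $f_B(V)-f_B(U)=\operatorname{tr}\bigl((V-U)K(V-U)^*\bigr)\ge0$ for every feasible $V$; both rest on the same square-completion argument.
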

\begin{proof}
At a second-order stationary point, stationarity gives \(UK=B\), and Theorem~\ref{thm:curvature} gives \(K\succeq0\).
By~\eqref{eq:complete-square}, every feasible \(V\) satisfies
\begin{equation}\label{eq:global-completion}
 f_B(V)-f_B(U)
 =\operatorname{tr}\bigl((V-U)K(V-U)^*\bigr)\ge0.
\end{equation}
At a nonglobal stationary point, \(K\) cannot be positive semidefinite, so the exact curvature formula and its attaining direction give strictly negative curvature.
\end{proof}

\begin{proposition}[Inverse-free objective error bounds in all dimensions]\label{prop:inverse-free}\label{thm:inverse-free}
Let \(n\ge k\), let \(B\) be arbitrary, and let \(U\) be exactly feasible. Set
\(K(U)=I_m+i\Omega_B(U)\) and write
\[
 F=f_B(U),\quad F_*=F_n(B),\quad
 \gamma=\|\operatorname{grad}f_B(U)\|_F,\quad
 \delta=\max\{0,-\lambda_{\min}(K(U))\}.
\]
Then
\begin{align}
 F_*&\ge
 \left[\max\left\{0,
 \frac{(1-\delta)\sqrt F-\gamma}{1+\delta}\right\}\right]^2,
 \label{eq:inverse-free-sharp}\\
 0\le F-F_*&\le2\sqrt F\,\gamma+4\delta F.
 \label{eq:inverse-free-coarse}
\end{align}
Neither dual feasibility nor primal strong duality is required.
\end{proposition}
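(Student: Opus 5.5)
The plan is to avoid any dual feasibility argument and compare $U$ directly with an arbitrary feasible competitor $V$ through the Lagrangian at the fixed multiplier $\Omega=\Omega_B(U)$. Feasibility of $V$ and $U$ makes the constraint term vanish, so $f_B(V)=\mathcal L_B(V,\Omega)$ and $F=\mathcal L_B(U,\Omega)$. By the expansion following~\eqref{eq:lagrangian}, $\mathcal L_B(\cdot,\Omega)$ is a real quadratic in $V$ with Euclidean gradient $2(VK-B)$ and quadratic part $\operatorname{tr}(VKV^*)$, where $K=K(U)$. An exact second-order Taylor expansion around $U$, with $W=V-U$, then gives
\[
 f_B(V)-F=\operatorname{Re}\operatorname{tr}\bigl(W^*\,2(UK-B)\bigr)+\operatorname{tr}(WKW^*).
\]
By~\eqref{eq:gradient}, the first term is $\langle W,\operatorname{grad}f_B(U)\rangle_{\mathbb R}$. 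This identity holds for every $K$, whether or not $K\succeq0$, which is why neither dual feasibility nor strong duality will be needed.

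Next I bound the two terms from below. Cauchy--Schwarz gives $\langle W,\operatorname{grad}f_B(U)\rangle_{\mathbb R}\ge-\gamma\|W\|_F$. Since $K\succeq\lambda_{\min}(K)I_m\succeq-\delta I_m$, the same manipulation as in~\eqref{eq:curvature-ambient-bound} gives $\operatorname{tr}(WKW^*)\ge-\delta\|W\|_F^2$. The triangle inequality through $B$ gives $\|W\|_F\le a+b$, where $a=\sqrt{f_B(V)}$ and $b=\sqrt F$. Combining these,
\[
 a^2-b^2\ge-\gamma(a+b)-\delta(a+b)^2 .
\]

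If $a+b>0$, I divide by $a+b$ to get $a-b\ge-\gamma-\delta(a+b)$, that is, $(1+\delta)a\ge(1-\delta)b-\gamma$. If $a=b=0$, this inequality holds trivially.

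Now take $V$ to be a primal global minimizer, which exists by Lemma~\ref{lem:regularity}, so that $a=\sqrt{F_*}$. Together with $a\ge0$, the last inequality gives~\eqref{eq:inverse-free-sharp} after squaring.

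For~\eqref{eq:inverse-free-coarse}, the lower bound $F-F_*\ge0$ is immediate because $U$ is feasible. For the upper bound I use the undivided inequality with $V$ optimal, namely $F-F_*\le\gamma(a+b)+\delta(a+b)^2$. Then I apply $a\le b$, which holds since $F_*\le F$, to obtain $F-F_*\le2b\gamma+4\delta b^2=2\sqrt F\gamma+4\delta F$.

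I expect the only delicate step to be the exact expansion of the Lagrangian difference. It uses that both points are exactly feasible, so that $f_B=\mathcal L_B$ at each, and that the Riemannian gradient equals $2(UK-B)$ by Proposition~\ref{prop:hessian}. Everything after that is elementary scalar algebra.
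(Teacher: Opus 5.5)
Your proposal is correct and takes the same route as the paper: the exact Lagrangian expansion at the fixed multiplier $\Omega_B(U)$, then Cauchy--Schwarz with $K(U)\succeq-\delta I_m$, the triangle inequality $\|V_*-U\|_F\le\sqrt F+\sqrt{F_*}$, and division by $\sqrt F+\sqrt{F_*}$. The differences are cosmetic: you keep $V$ general before specializing to a minimizer, you write out the expansion explicitly, and your roles of $a$ and $b$ are swapped relative to the paper's.
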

\begin{proof}
Write \(g(U)=\operatorname{grad}f_B(U)\). Expanding the Lagrangian with the multiplier at \(U\) fixed gives, for every feasible \(V\),
\begin{equation}\label{eq:nonstationary-completion}
 f_B(V)-f_B(U)
 =\langle g(U),V-U\rangle_{\mathbb R}
  +\operatorname{tr}\bigl((V-U)K(U)(V-U)^*\bigr).
\end{equation}
Choose a primal minimizer \(V_*\) and set \(\rho=\|V_*-U\|_F\). Cauchy--Schwarz and
\(K(U)\succeq-\delta I_m\) yield
\begin{equation}\label{eq:inverse-free-distance-step}
 0\le F-F_*\le\gamma\rho+\delta\rho^2.
\end{equation}
Let \(a=\sqrt F\) and \(b=\sqrt{F_*}\). Then \(0\le b\le a\), and the triangle inequality gives
\(\rho\le a+b\). Hence
\[
 (a-b)(a+b)\le\gamma(a+b)+\delta(a+b)^2.
\]
If \(a+b=0\), the result holds. Otherwise, division by \(a+b\) gives
\((1+\delta)b\ge(1-\delta)a-\gamma\), which, together with \(b\ge0\), proves~\eqref{eq:inverse-free-sharp}.
Using \(\rho\le2a\) in~\eqref{eq:inverse-free-distance-step} proves~\eqref{eq:inverse-free-coarse}.
\end{proof}

\begin{corollary}[Global objective error at approximate second-order stationary points]\label{cor:approx-sosp}
Let \(n\ge2k-1\), let \(B\) be arbitrary, and let \(U\) be exactly feasible with
\[
 \|\operatorname{grad}f_B(U)\|_F\le\varepsilon_g,\qquad
 \operatorname{Hess}f_B(U)\succeq-\varepsilon_H I,
 \qquad \varepsilon_g,\varepsilon_H\ge0,
\]
where the Hessian inequality is on the real tangent space. Then
\begin{equation}\label{eq:sosp-global-error}
 0\le f_B(U)-F_*\le
 2\sqrt{f_B(U)}\,\varepsilon_g+2 f_B(U)\varepsilon_H,
 \qquad F_*=F_n(B).
\end{equation}
\end{corollary}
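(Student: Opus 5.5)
The plan is to combine Proposition~\ref{prop:inverse-free} with Theorem~\ref{thm:curvature}. The key step is to convert the Hessian condition into a bound on $\delta$.

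Since $n\ge2k-1$, Theorem~\ref{thm:curvature} gives
\[
 \lambda_{\min}\bigl(\operatorname{Hess}f_B(U)\bigr)=2\lambda_{\min}(K(U)),
\]
where the minimum is the Rayleigh quotient over the whole real tangent space. The hypothesis $\operatorname{Hess}f_B(U)\succeq-\varepsilon_H I$ on the tangent space therefore gives $2\lambda_{\min}(K(U))\ge-\varepsilon_H$, that is, $\lambda_{\min}(K(U))\ge-\varepsilon_H/2$. Equivalently,
\[
 \delta=\max\{0,-\lambda_{\min}(K(U))\}\le\varepsilon_H/2 .
\]
The attaining tangent direction in Theorem~\ref{thm:curvature} is what makes this inequality, and not merely its reverse, available. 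This is exactly where the dimension hypothesis enters: without a tangent direction of the form $zv^*$, a negative eigenvalue of $K$ need not be visible to the Hessian.

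Next, with $\gamma=\|\operatorname{grad}f_B(U)\|_F\le\varepsilon_g$, the coarse bound~\eqref{eq:inverse-free-coarse} of Proposition~\ref{prop:inverse-free} applies to every exactly feasible $U$ in all dimensions. It yields
\[
 0\le f_B(U)-F_*\le2\sqrt{f_B(U)}\,\gamma+4\delta f_B(U)
 \le2\sqrt{f_B(U)}\,\varepsilon_g+2f_B(U)\varepsilon_H,
\]
which is~\eqref{eq:sosp-global-error}. The lower bound $0$ is immediate from feasibility of $U$.

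There is no real obstacle here; the only point needing care is the first step. One must note that the minimum in~\eqref{eq:min-curvature-definition} ranges over the same real tangent space on which the hypothesis $\operatorname{Hess}f_B(U)\succeq-\varepsilon_H I$ is imposed. Once that is checked, $-\varepsilon_H\le\lambda_{\min}(\operatorname{Hess}f_B(U))$ transfers directly to $K(U)$ via the exact formula~\eqref{eq:exact-curvature}.
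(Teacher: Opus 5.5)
Your proposal is correct and follows the paper's proof: Theorem~\ref{thm:curvature} gives $\delta\le\varepsilon_H/2$, and substituting this together with $\gamma\le\varepsilon_g$ into~\eqref{eq:inverse-free-coarse} yields the bound. Your remark on direction is also right: the attaining rank-one tangent direction supplies $\lambda_{\min}(\operatorname{Hess}f_B(U))\le2\lambda_{\min}(K(U))$, which is the inequality this step actually needs.
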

\begin{proof}
Theorem~\ref{thm:curvature} gives \(\delta\le\varepsilon_H/2\). Combining this with \(\gamma\le\varepsilon_g\)
in~\eqref{eq:inverse-free-coarse} proves the result.
\end{proof}

These bounds control suboptimality through residuals and negative eigenvalues; see also~\cite[Lemma~6]{BoumalVoroninskiBandeira2016}.
They require no inverse of $K$ and therefore allow singular multipliers.

\label{end:threshold}

\section{Strong duality under Hermitian row weighting}\label{sec:weighted}

\subsection{The weighted problem and main theorem}\label{subsec:weighted-main}
We extend the squared-distance problem to Hermitian positive definite row weights; $H=I_n$ recovers the preceding problem.
Fix \(n\ge k\ge1\), \(m=2k\), and a Hermitian positive definite matrix
\(H\in\mathbb C^{n\times n}\). Consider
\begin{equation}\label{eq:weighted-objective}
 F_H(B)=\min_{U\in\mathcal M_{n,k}}f_{H,B}(U),\qquad
 f_{H,B}(U)=\operatorname{tr}\bigl((U-B)^*H(U-B)\bigr).
\end{equation}
The target \(B\in\mathbb C^{n\times m}\) is arbitrary, the constraint remains
\(\operatorname{Im}(U^*U)=J_m\), and the induced Euclidean metric is unchanged.
The weight \(H\) enters only the objective. If \(H=H_R+iH_I\), its real representation is
\[
 \begin{bmatrix}H_R&-H_I\\H_I&H_R\end{bmatrix}.
\]
This matrix commutes with the standard complex structure. The Euclidean curvature analysis below assumes this structure.

For a real skew-symmetric multiplier \(\Omega\), define
\begin{equation}\label{eq:weighted-lagrangian}
 \begin{aligned}
 \mathcal L_{H,B}(U,\Omega)
 &=f_{H,B}(U)+\langle\Omega,\operatorname{Im}(U^*U)-J_m\rangle_F,\\
 q_{H,B}(\Omega)&=\inf_{U\in\mathbb C^{n\times m}}\mathcal L_{H,B}(U,\Omega).
 \end{aligned}
\end{equation}
Write \(D_H(B)=\sup_{\Omega^T=-\Omega}q_{H,B}(\Omega)\) for the standard Lagrangian dual value, and let
\begin{equation}\label{eq:weighted-min-eigenspace}
 \alpha=\lambda_{\min}(H)>0,\qquad
 \mathcal E_\alpha=\ker(H-\alpha I_n),\qquad
 d=\dim_{\mathbb C}\mathcal E_\alpha.
\end{equation}

\begin{theorem}[Smallest-eigenvalue multiplicity and universal strong duality]\label{thm:weighted}
For a fixed \(H=H^*\succ0\), standard Lagrangian strong duality
\(F_H(B)=D_H(B)\) holds for every \(B\in\mathbb C^{n\times m}\)
if and only if \(d\ge m-1\).
\end{theorem}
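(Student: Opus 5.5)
The plan is to separate the two directions, as in Theorem~\ref{thm:threshold}. Sufficiency will come from a weighted version of the rank-one curvature construction of Theorem~\ref{thm:curvature}. Necessity will come from a block-diagonal counterexample modelled on Lemma~\ref{lem:base-gap} and Proposition~\ref{prop:all-size-gap}.

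\emph{The weighted Lagrangian.} First I would compute the dual function as in Proposition~\ref{prop:primal-dual-function}. The quadratic part of $\mathcal L_{H,B}(\cdot,\Omega)$ is
\[
 \operatorname{tr}(U^*HU)+\operatorname{tr}(U\,i\Omega\,U^*),
\]
that is, the Hermitian operator $\mathcal K_\Omega:U\mapsto HU+U(i\Omega)$ on $\mathbb C^{n\times m}$. Its eigenvalues are $h_j+\mu_l$, where the $h_j$ are the eigenvalues of $H$ and the $\mu_l$ those of $i\Omega$, so $\lambda_{\min}(\mathcal K_\Omega)=\alpha-\|\Omega\|_2$. Hence $q_{H,B}(\Omega)>-\infty$ exactly when $\|\Omega\|_2\le\alpha$ and the linear term $HB$ lies in the range of $\mathcal K_\Omega$. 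On that domain, completing the square gives
\[
 \mathcal L_{H,B}(U,\Omega)-q_{H,B}(\Omega)=\langle U-D,\mathcal K_\Omega(U-D)\rangle_{\mathbb R},
\]
and the minimizers over all of $\mathbb C^{n\times m}$ are the solutions of $HU+Ui\Omega=HB$.

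For the landscape, the normal-projection multiplier of Proposition~\ref{prop:normal-multiplier} gives a weighted multiplier $\Omega_{H,B}(U)$ from a Sylvester equation. The Riemannian gradient is $2(HU+Ui\Omega-HB)$. The argument of Proposition~\ref{prop:hessian} carries over verbatim, because the derivative of the multiplier term is still normal, so the Hessian quadratic form on $T_U\mathcal M$ is $2\langle Z,\mathcal K_\Omega Z\rangle_{\mathbb R}$.

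\emph{Sufficiency ($d\ge m-1$).} Take a global minimizer $U_\star$ (attainment follows as in Lemma~\ref{lem:regularity}, since $H\succ0$) and its multiplier $\Omega_\star$.
\begin{enumerate}
 \item Choose the eigenvector $v=(a+ib)/\sqrt2$ of $i\Omega_\star$ for the eigenvalue $-\|\Omega_\star\|_2$, exactly as in Theorem~\ref{thm:curvature}.
 \item Choose $z$ in $\mathcal E_\alpha\cap\ker E^*$, where $E$ consists of the $m-2$ columns of $U_\star O$ complementary to $a,b$. This intersection has complex dimension at least $d-(m-2)\ge1$.
 \item Rotate the phase of $z$ so that $\operatorname{Re}((u_1+iu_2)^*z)=0$. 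Then $Z=zv^*$ is tangent, and $\mathcal K_{\Omega_\star}Z=(\alpha-\|\Omega_\star\|_2)Z$.
\end{enumerate}
Second-order necessity then forces $\|\Omega_\star\|_2\le\alpha$, so $\mathcal K_{\Omega_\star}\succeq0$. First-order stationarity says $U_\star$ is a critical point of the convex function $\mathcal L_{H,B}(\cdot,\Omega_\star)$, hence a global minimizer of it. This gives
\[
 F_H(B)=\mathcal L_{H,B}(U_\star,\Omega_\star)=q_{H,B}(\Omega_\star)\le D_H(B)\le F_H(B).
\]

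\emph{Necessity ($d\le m-2$).} I expect this to be the main obstacle. After a unitary change of rows, which preserves $\mathcal M$ since $\operatorname{Im}(U^*U)$ is invariant, I may take $H=\operatorname{diag}(\alpha I_d,H')$ with $H'\succ\alpha I$.
\begin{enumerate}
 \item Mimic Proposition~\ref{prop:all-size-gap} with a block target. The first block lives in two rows of $\mathcal E_\alpha$ and imitates $B_0$, with multiplier $\alpha J_2$ on its first symplectic pair. This pair saturates $\|\Omega\|_2=\alpha$, so the operator $\mathcal K_\Omega$ is singular only in directions $z\,v^*$ with $z\in\mathcal E_\alpha$.
 \item Fill the remaining columns with feasible blocks (scaled $C_1,C_2$) carrying zero multiplier. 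Place these blocks so that, together with the second pair of the base block, they occupy all $d-2$ remaining rows of $\mathcal E_\alpha$ and enough further rows to give full row rank. This uses $d-2\le m-4$: the other columns exhaust the $\alpha$-eigenspace.
 \item Choose the targets as $H^{-1}$ times the desired linear terms, so that $HU_0+U_0i\Omega_0=HB$ holds for an explicit feasible-relaxation pair $(G_0,U_0)$ with complementary slack, as in Lemma~\ref{lem:base-gap}.
 \item Identify $D_H(B)$ with the weighted relaxation value, via the weighted analogues of Propositions~\ref{prop:dual} and~\ref{prop:sdp-equality}, which carry over verbatim.
 \item Any primal point attaining this value must solve $HU+Ui\Omega_0=HB$. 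The kernel freedom is then $h\in\mathcal E_\alpha$, confined as in~\eqref{eq:gap-embedded-family}. The cross constraints with the full-row-rank remaining columns pin $h$ down, and the first symplectic pair then has the wrong imaginary inner product.
\end{enumerate}
The delicate point is choosing the numeric blocks so that the cross constraints pin down $h$ within $\mathcal E_\alpha$ while the Lagrangian equation remains consistent; I would first try rescaling the unweighted example by $\alpha$ and treating the rows outside $\mathcal E_\alpha$ as inert. If exact numbers are awkward, a fallback is to argue that a strict gap persists by continuity, in the spirit of~\eqref{eq:extra-rows}, after perturbing $H$ within the class having the same $d$.
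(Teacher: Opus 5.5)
\textbf{Sufficiency.} Your argument here is the paper's. The rank-one direction $zv^*$ with $z\in\mathcal E_\alpha\cap\ker E^*$ forces $\|\Omega_\star\|_2\le\alpha$ at a global minimizer, and stationarity plus convexity of $\mathcal L_{H,B}(\cdot,\Omega_\star)$ then gives strong duality.

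\textbf{Necessity for $d<k$: a genuine gap.} Your primary construction places a copy of $B_0$ in two rows of $\mathcal E_\alpha$. Your fallback (rescale the unweighted example and keep the rows outside $\mathcal E_\alpha$ inert) needs an unweighted gap example with $d$ rows. Such an example exists only for $k\le d\le m-2$, because $\mathcal M_{d,k}=\emptyset$ when $d<k$. Consequently nothing in your proposal handles $d=1$, for instance $H=\operatorname{diag}(1,2,\dots,2)$ with $k\ge2$. The range $2\le d<k$ is at best sketched: filler pairs would have to sit in higher-weight rows, and the dual optimality of your multiplier would have to be re-verified.

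The missing idea is that for $d<k$ no special target is needed; take $B=0$. The effective domain is then exactly $\|\Omega\|_2\le\alpha$. Since every singular value of $J_m$ is $1$, $q_{H,0}(\Omega)=-\langle\Omega,J_m\rangle_F\le\alpha m$, with equality at $\Omega=-\alpha J_m$. A feasible point attaining $\alpha m$ would satisfy $u_jK_j=0$ with $K_j\succ0$ in every row outside $\mathcal E_\alpha$. It would therefore be supported on $d<k$ rows, contradicting $2k=\operatorname{rank}\operatorname{Im}(U^*U)\le2\operatorname{rank}_{\mathbb C}U$. Primal attainment then gives $F_H(0)>D_H(0)$.

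\textbf{Necessity for $k\le d\le m-2$.} Your fallback is the right choice, and it removes what you call the delicate point. Take $B=[B_{\rm low};0]$, where $B_{\rm low}$ is the $d$-row target of Proposition~\ref{prop:all-size-gap}. Then the extended-real identity $q_{H,B}(\alpha\widetilde\Omega)=\alpha\,d_{B_{\rm low}}(\widetilde\Omega)$ holds for every skew $\widetilde\Omega$. The $\mathcal E_\alpha$ rows reproduce $\alpha$ times the unweighted dual. The remaining rows have zero target, and $K_j=(a_j-\alpha)I_m+\alpha(I_m+i\widetilde\Omega)\succ0$ whenever the unweighted dual is finite. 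This transfers the dual value $18\alpha$ and its optimality with no weighted relaxation at all.

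That matters, because your step~4 does not carry over verbatim. $\operatorname{tr}(U^*HU)$ is not a function of $U^*U$ unless $H$ is scalar, so Propositions~\ref{prop:dual} and~\ref{prop:sdp-equality} have no literal weighted analogue. A row-wise lifting $G_j\succeq u_j^*u_j$ could be made to work, but it would have to be set up and justified. With the identity above, a primal point of value $18\alpha$ must vanish outside $\mathcal E_\alpha$ and restrict to a feasible point of value $18$ for $B_{\rm low}$, which is impossible.

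Your other fallback, continuity after perturbing $H$, cannot replace this, since the theorem concerns each fixed $H$.
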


In the unweighted case, $d=n$, so the criterion agrees with Theorem~\ref{thm:threshold}.

\subsection{The weighted dual and attainment}\label{subsec:weighted-dual}

\begin{proposition}[Dual representation, attainment, and equality conditions]\label{prop:weighted-dual}
For every target, the primal problem and its standard Lagrangian dual have finite attained optimal values.
In unitary row coordinates, let \(H=\operatorname{diag}(a_1,\ldots,a_n)\), where
\(a_1=\cdots=a_d=\alpha\) and all remaining eigenvalues exceed \(\alpha\).
Let \(u_j,b_j\) denote the rows of \(U,B\) indexed by \(j\), and set \(K_j=a_jI_m+i\Omega\).
The effective domain of the dual function is
\begin{equation}\label{eq:weighted-dual-domain}
 \mathcal D_{H,B}
 =\{\Omega:\Omega^T=-\Omega,\quad
        K_j\succeq0,\quad b_j\ker K_j=\{0\}\quad(1\le j\le n)\}.
\end{equation}
On this domain,
\begin{equation}\label{eq:weighted-dual}
 q_{H,B}(\Omega)=\sum_{j=1}^n
 \left(a_j\|b_j\|^2-a_j^2b_jK_j^\dagger b_j^*\right)
     -\langle\Omega,J_m\rangle_F;
\end{equation}
elsewhere \(q_{H,B}(\Omega)=-\infty\).
All \(K_j\) are positive semidefinite if and only if \(\|\Omega\|_2\le\alpha\);
the effective domain also requires the kernel conditions above.

For any \(\Omega\in\mathcal D_{H,B}\), let \(c_j=a_jb_jK_j^\dagger\). Then
\begin{equation}\label{eq:weighted-square}
 \mathcal L_{H,B}(U,\Omega)-q_{H,B}(\Omega)
 =\sum_{j=1}^n\|(u_j-c_j)K_j^{1/2}\|^2.
\end{equation}
Hence a feasible point attains the dual value at this multiplier if and only if
\begin{equation}\label{eq:weighted-equality}
 u_jK_j=a_jb_j\qquad(1\le j\le n).
\end{equation}
\end{proposition}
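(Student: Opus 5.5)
We first reduce to diagonal row weights. Take a unitary $W$ with $W^*HW=\operatorname{diag}(a_1,\ldots,a_n)$ and substitute $U\mapsto W^*U$, $B\mapsto W^*B$. Since $(W^*U)^*(W^*U)=U^*U$, the constraint $\operatorname{Im}(U^*U)=J_m$ is unchanged. The substitution also preserves $f_{H,B}$, the Lagrangian, and therefore $q_{H,B}$, so we may assume $H=\operatorname{diag}(a_j)$. With $\operatorname{Im}(U^*U)=\sum_j\operatorname{Im}(u_j^*u_j)$ and $\operatorname{tr}(U^*HU)=\sum_j a_j\|u_j\|^2$, the identity $\langle\Omega,\operatorname{Im}(u^*u)\rangle_F=\operatorname{tr}(u\,i\Omega\,u^*)$ (as in the expansion after \eqref{eq:lagrangian}) splits the Lagrangian by rows:
\[
 \mathcal L_{H,B}(U,\Omega)=\sum_{j=1}^n\Bigl(u_jK_ju_j^*-2a_j\operatorname{Re}(u_jb_j^*)+a_j\|b_j\|^2\Bigr)-\langle\Omega,J_m\rangle_F .
\]

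The infimum over $U$ therefore separates into $n$ independent row problems. For each row we repeat the argument of Proposition~\ref{prop:primal-dual-function} with $K$ replaced by $K_j$ and $B$ by $a_jb_j$.
\begin{itemize}
 \item If $K_j$ has a negative eigenvalue, scaling a row along that eigenvector sends the value to $-\infty$.
 \item If $K_j\succeq0$ but $b_jv\ne0$ for some $v\in\ker K_j$, the choice $u_j=t\,\overline{b_jv}\,v^*$ makes the linear term linear in $t$ with nonzero slope and drives the value to $-\infty$.
 \item Otherwise $c_j=a_jb_jK_j^\dagger$ satisfies $c_jK_j=a_jb_j$. Completing the square gives row value $a_j\|b_j\|^2-a_j^2b_jK_j^\dagger b_j^*$ and residual $\|(u_j-c_j)K_j^{1/2}\|^2$.
\end{itemize}
Summing over rows gives \eqref{eq:weighted-dual-domain}, \eqref{eq:weighted-dual} and \eqref{eq:weighted-square}. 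A feasible $U$ satisfies $\mathcal L_{H,B}(U,\Omega)=f_{H,B}(U)$, so the equality characterization \eqref{eq:weighted-equality} follows because each residual vanishes exactly when $(u_j-c_j)K_j=0$. The eigenvalues of $i\Omega$ are $\pm i\cdot i\omega_\ell$, that is, $\pm\omega_\ell$, so $\lambda_{\min}(K_j)=a_j-\|\Omega\|_2$. Hence all $K_j\succeq0$ if and only if $\|\Omega\|_2\le\min_j a_j=\alpha$.

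Primal attainment follows as in Lemma~\ref{lem:regularity}. The feasible set is closed and nonempty, and $f_{H,B}\ge\alpha\|U-B\|_F^2$ is coercive. So $F_H(B)$ is finite and attained, and weak duality gives $D_H(B)\le F_H(B)<\infty$.

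Dual attainment is the main obstacle. The domain $\{\|\Omega\|_2\le\alpha\}$ is compact, and $q_{H,B}$ is concave and upper semicontinuous, being an infimum of affine functions of $\Omega$. It is also finite at $\Omega=0$, where every $K_j=a_jI_m\succ0$. A proper upper semicontinuous function on a compact set attains its supremum, so $D_H(B)$ is finite and attained. The point to check is that $q_{H,B}$ really takes values in $[-\infty,\infty)$ and is upper semicontinuous there even when kernel conditions fail near the boundary. This holds because $q_{H,B}$ is a pointwise infimum of continuous functions of $\Omega$, so no separate analysis of the pseudoinverse terms is needed.
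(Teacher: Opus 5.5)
Your proof follows the paper's route: unitary diagonalization of $H$, row-wise separation of the Lagrangian, and the square completion of Proposition~\ref{prop:primal-dual-function} applied to each row. The remaining steps also match: $\lambda_{\min}(K_j)=a_j-\|\Omega\|_2$, coercivity for primal attainment, and upper semicontinuity on the compact ball $\|\Omega\|_2\le\alpha$ for dual attainment, using $q_{H,B}(0)=0$ and the weak-duality bound.

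There is one small slip, in the kernel case. Your test row $u_j=t\,\overline{b_jv}\,v^*$ does give $u_jK_ju_j^*=0$. But it also gives $u_jb_j^*=t\,\overline{(b_jv)^2}$, and the real part of this vanishes when $\arg(b_jv)=\pi/4$. So the slope need not be nonzero.

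Take instead $u_j=t\,(b_jv)\,v^*$. Then $u_jb_j^*=t|b_jv|^2$, the linear term is $-2a_jt|b_jv|^2\to-\infty$, and this is exactly the row version of the paper's choice $U=t(Bv)v^*$. With that correction the argument is complete.
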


\begin{proof}
A unitary row transformation preserves the constraint, objective, and Euclidean inner product.
In diagonal coordinates, the quadratic and linear terms for row $j$ are $u_jK_ju_j^*$ and $-2a_j\operatorname{Re}(b_ju_j^*)$.
Applying the square-completion argument of Proposition~\ref{prop:primal-dual-function} to each row gives the effective domain, dual formula, and equality conditions.
Since $\lambda_{\min}(i\Omega)=-\|\Omega\|_2$, all $K_j$ are positive semidefinite precisely when $\|\Omega\|_2\le\alpha$.

The primal feasible set is nonempty and closed, and $f_{H,B}(U)\ge\alpha\|U-B\|_F^2$, so the primal minimum is attained.
The dual function is an infimum of continuous affine functions and hence is upper semicontinuous.
It is $-\infty$ outside the compact ball $\{\Omega\in\operatorname{skew}(m):\|\Omega\|_2\le\alpha\}$,
equals zero at the origin, and has a finite upper bound by weak duality.
Its maximum is therefore also attained.
\end{proof}

\subsection{Minimum curvature of the weighted problem}\label{subsec:weighted-curvature}
Following the normal projection construction in Section~\ref{sec:curvature}, define the unique real skew-symmetric multiplier
$\Omega_{H,B}(U)$ at a feasible point $U$ by
\begin{equation}\label{eq:weighted-normal-projection}
 P_{N_U\mathcal M}\bigl(2H(U-B)\bigr)=-2iU\Omega_{H,B}(U).
\end{equation}
The metric is unchanged, so uniqueness and smooth dependence follow by the argument of Proposition~\ref{prop:normal-multiplier}.

\begin{proposition}[Weighted minimum curvature]\label{prop:weighted-curvature}
Suppose the smallest eigenvalue $\alpha$ of $H=H^*\succ0$ has complex multiplicity $d\ge m-1$.
For every target $B\in\mathbb C^{n\times m}$ and feasible point $U\in\mathcal M_{n,k}$,
the minimum curvature under the induced Euclidean metric is
\begin{equation}\label{eq:weighted-curvature}
 \lambda_{\min}\bigl(\operatorname{Hess}f_{H,B}(U)\bigr)
 =2\bigl(\alpha-\|\Omega_{H,B}(U)\|_2\bigr),
\end{equation}
and is attained by a tangent vector of complex rank one.
\end{proposition}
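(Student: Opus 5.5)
First compute the Riemannian gradient and Hessian for the weighted objective, mirroring Proposition~\ref{prop:hessian}. The ambient gradient is $2H(U-B)$. Subtracting the normal projection~\eqref{eq:weighted-normal-projection} gives
\[
 \operatorname{grad}f_{H,B}(U)=2\bigl(HU-HB+iU\Omega\bigr),\qquad \Omega=\Omega_{H,B}(U).
\]
Next differentiate this along a tangent curve. The term $2iU\,D\Omega[Z]$ is normal, since $D\Omega[Z]$ is real skew-symmetric. The projected-derivative formula then gives $\operatorname{Hess}f_{H,B}(U)[Z]=\Pi_U\bigl(2(HZ+iZ\Omega)\bigr)$. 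Because $Z$ is tangent, the quadratic form is
\[
 \langle Z,\operatorname{Hess}f_{H,B}(U)[Z]\rangle_{\mathbb R}
 =2\operatorname{tr}(Z^*HZ)+2\operatorname{tr}(Z(i\Omega)Z^*).
\]

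For the lower bound, use $H\succeq\alpha I_n$ and $i\Omega\succeq-\|\Omega\|_2 I_m$, where the second inequality holds because the eigenvalues of $i\Omega$ are $\pm$ the singular values of $\Omega$. Together these give the form $\ge2(\alpha-\|\Omega\|_2)\|Z\|_F^2$, which is the same argument as~\eqref{eq:curvature-ambient-bound}. Equality requires two things at once: $HZ=\alpha Z$, meaning every column of $Z$ lies in $\mathcal E_\alpha$, and $Z(i\Omega)=-\|\Omega\|_2Z$.

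For attainment, I would reuse the construction of Theorem~\ref{thm:curvature} with one extra restriction on $z$. Pick the unit eigenvector $v=(a+ib)/\sqrt2$ of $I_m+i\Omega$ for its smallest eigenvalue; equivalently $v$ is an eigenvector of $i\Omega$ for $-\|\Omega\|_2$. As before, $v^Tv=0$, or $a,b$ are chosen arbitrarily when $\Omega=0$. Set $Z=zv^*$, complete $a,b$ to a real orthogonal $O$, and write $UO=[u_1,u_2,E]$, where $E$ has $m-2$ columns. Now require
\[
 z\in\mathcal E_\alpha\cap\ker E^*.
\]
This subspace has complex dimension at least $d-(m-2)\ge1$ by the hypothesis $d\ge m-1$, so a nonzero $z_0$ exists.

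Then adjust the phase of $z=e^{i\theta}z_0$ so that $\operatorname{Re}((u_1+iu_2)^*z)=0$. Multiplying by a phase keeps $z$ in the complex subspace $\mathcal E_\alpha\cap\ker E^*$. The block computation from Theorem~\ref{thm:curvature} is unchanged, because the tangent condition does not involve $H$, so $Z\in T_U\mathcal M$. By construction $HZ=\alpha Z$ and $Z(i\Omega)=-\|\Omega\|_2 Z$, so the quadratic form equals $2(\alpha-\|\Omega\|_2)\|Z\|_F^2$. Moreover $2(HZ+iZ\Omega)=2(\alpha-\|\Omega\|_2)Z$ is already tangent, so $Z$ is a Hessian eigenvector. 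Its complex rank is one.

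The main point needing care is the Hessian formula. I need to verify that the weighted term differentiates to $\Pi_U(2HZ)$ without extra curvature terms. This holds because the ambient gradient is affine in $U$, and the only $U$-dependent multiplier correction, $2iU\,D\Omega[Z]$, lies in the normal space $N_U\mathcal M$ by~\eqref{eq:normal}. The dimension count $d-(m-2)\ge1$ is exactly where the multiplicity hypothesis enters.
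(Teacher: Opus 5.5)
Your proposal is correct and follows essentially the same route as the paper: the Rayleigh-quotient bound $\operatorname{tr}(Z^*HZ)+\operatorname{tr}(Zi\Omega Z^*)\ge(\alpha-\|\Omega\|_2)\|Z\|_F^2$ with its two equality conditions, then the rank-one direction $Z=zv^*$ with $z\in\mathcal E_\alpha\cap\ker E^*$ (nonzero by the count $d-(m-2)\ge1$) and the phase adjustment making $\operatorname{Re}((u_1+iu_2)^*z)=0$. The paper only cites the Hessian formula and does not state the eigenvector property here. Your explicit derivation of the weighted Hessian and your check that $Z$ is a Hessian eigenvector are correct additions.
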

\begin{proof}
Write $\Omega=\Omega_{H,B}(U)$ and $\omega=\|\Omega\|_2$.
The projected-derivative calculation in Proposition~\ref{prop:hessian} gives, for $Z\in T_U\mathcal M$,
\begin{equation}\label{eq:weighted-rayleigh-gap}
 \begin{aligned}
 \tfrac12\langle Z,\operatorname{Hess}f_{H,B}(U)[Z]\rangle_{\mathbb R}
 &=\operatorname{tr}(Z^*HZ)+\operatorname{tr}(Zi\Omega Z^*)\\
 &\ge(\alpha-\omega)\|Z\|_F^2.
 \end{aligned}
\end{equation}
Equality holds if and only if $HZ=\alpha Z$ and $Zi\Omega=-\omega Z$.
Choose the unit vector $v=(a+ib)/\sqrt2$ and real orthogonal matrix $O=[a,b,O_r]$ as in the proof of Theorem~\ref{thm:curvature},
and write $UO=[u_1,u_2,E]$.
For $Z=zv^*$ to satisfy both equality conditions, $z$ must lie in the smallest eigenspace $\mathcal E_\alpha$.
Since $E$ has $m-2$ columns,
\[
 \dim_{\mathbb C}(\mathcal E_\alpha\cap\ker E^*)\ge d-(m-2)\ge1.
\]
Choose a nonzero vector in this intersection and adjust its phase as in Theorem~\ref{thm:curvature} so that
$\operatorname{Re}((u_1+i u_2)^*z)=0$.
Then $Z=zv^*$ is a tangent vector of complex rank one attaining the lower bound.
The construction includes $\omega=0$ and $m=2$.
\end{proof}

\subsection{Proof of the multiplicity criterion}\label{subsec:weighted-proof}

\begin{proof}[Proof of Theorem~\ref{thm:weighted}]
\emph{Sufficiency.}
Suppose $d\ge m-1$. At a primal global minimizer $U_*$, the first- and second-order necessary conditions and Proposition~\ref{prop:weighted-curvature} give
\[
 H(U_*-B)+iU_*\Omega_*=0,\qquad \|\Omega_*\|_2\le\alpha.
\]
Consequently,
\[
 K_j=a_jI_m+i\Omega_*
 \succeq(a_j-\|\Omega_*\|_2)I_m
 \succeq(a_j-\alpha)I_m\succeq0,
\]
and the equations $u_jK_j=a_jb_j$ ensure the range conditions.
Equation~\eqref{eq:weighted-square} gives $f_{H,B}(U_*)=q_{H,B}(\Omega_*)$, establishing strong duality.

\emph{Necessity.}
Work in diagonal row coordinates and choose targets with \(b_j=0\) for \(j>d\).
For every multiplier with finite dual value,
\[
 K_j=(a_j-\alpha)I_m+(\alpha I_m+i\Omega)\succ0,\qquad j>d.
\]
If a primal feasible point attains that dual value, \eqref{eq:weighted-equality} forces \(u_j=0\) in all rows with higher weights.

First suppose \(d<k\), and take \(B=0\).
The effective dual domain is exactly \(\|\Omega\|_2\le\alpha\), and
\(q_{H,0}(\Omega)=-\langle\Omega,J_m\rangle_F\).
Since every singular value of \(J_m\) is \(1\), this expression is at most \(\alpha m\),
attained at \(\Omega=-\alpha J_m\).
If a primal feasible point had the same value, vanishing of the rows with higher weights would imply
\(\operatorname{rank}_{\mathbb C}U\le d<k\), contradicting
\[
 2k=\operatorname{rank}_{\mathbb R}\operatorname{Im}(U^*U)
 \le2\operatorname{rank}_{\mathbb C}(U^*U)
 =2\operatorname{rank}_{\mathbb C}U
\]
Primal attainment therefore gives \(F_H(0)>\alpha m=D_H(0)\).

Now suppose \(k\le d\le m-2\).
Take the \(d\)-by-\(m\) target \(B_{\rm low}\) constructed in the proof of Theorem~\ref{thm:threshold};
its primal value exceeds \(18\), while its dual value is \(18\).
Denote an optimal multiplier by \(\Omega_0\in\operatorname{skew}(m)\), and set
\[
 B=\begin{bmatrix}B_{\rm low}\\0\end{bmatrix},\qquad
 \Omega=\alpha\widetilde\Omega,\qquad
 \widetilde K=I_m+i\widetilde\Omega.
\]
For the first \(d\) rows, \(K_j=\alpha\widetilde K\) and
\(K_j^\dagger=\alpha^{-1}\widetilde K^\dagger\).
Their contribution together with the constraint term is therefore \(\alpha\) times the lower-dimensional dual function.
All remaining target rows are zero and contribute zero.
On the lower-dimensional effective domain, their coefficients
\((a_j-\alpha)I_m+\alpha\widetilde K\) are positive definite and impose no additional kernel condition.
Conversely, failure of a lower-dimensional semidefinite or kernel condition makes the weighted dual value \(-\infty\) as well.
Consequently, the following extended-real identity holds on the entire multiplier space:
\begin{equation}\label{eq:weighted-dual-scaling}
 q_{H,B}(\alpha\widetilde\Omega)
   =\alpha d_{B_{\rm low}}(\widetilde\Omega),
 \qquad \widetilde\Omega^T=-\widetilde\Omega.
\end{equation}
It follows that \(D_H(B)=18\alpha\), attained at \(\alpha\Omega_0\).
A primal point with this value would have all rows with higher weights equal to zero; the remaining \(d\) rows would form a feasible point of value \(18\) for the lower-dimensional problem, a contradiction.
Primal attainment then gives \(F_H(B)>18\alpha=D_H(B)\).
The two cases exhaust \(d<m-1\).
\end{proof}

\subsection{Second-order stationary points and anisotropic perturbations}\label{subsec:weighted-consequences}

\begin{corollary}[Global optimality of weighted second-order stationary points]\label{cor:weighted-sosp}
If \(d\ge m-1\), then for every target \(B\), every second-order stationary point is globally minimizing.
Every nonglobal first-order stationary point has a direction of strictly negative curvature.
\end{corollary}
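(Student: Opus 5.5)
The plan is to mirror the proof of Corollary~\ref{cor:sosp}, with the weighted completed-square identity~\eqref{eq:weighted-square} and the weighted curvature formula of Proposition~\ref{prop:weighted-curvature} replacing their unweighted counterparts. Work in unitary row coordinates where $H=\operatorname{diag}(a_1,\ldots,a_n)$. Since a unitary row change preserves the constraint, the metric, the tangent space, gradients and Hessians, nothing is lost. Let $U$ be a second-order stationary point and $\Omega=\Omega_{H,B}(U)$. By~\eqref{eq:weighted-normal-projection}, the Riemannian gradient equals $2H(U-B)+2iU\Omega$. Stationarity therefore gives $HU+iU\Omega=HB$, which row by row reads $u_jK_j=a_jb_j$ with $K_j=a_jI_m+i\Omega$.

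Next I use second-order stationarity together with Proposition~\ref{prop:weighted-curvature}, which needs $d\ge m-1$. These give $0\le 2(\alpha-\|\Omega\|_2)$, so $\|\Omega\|_2\le\alpha$. By Proposition~\ref{prop:weighted-dual}, every $K_j$ is then positive semidefinite. The kernel conditions $b_j\ker K_j=\{0\}$ follow because $a_jb_j=u_jK_j$ vanishes on $\ker K_j$ and $a_j>0$. Hence $\Omega\in\mathcal D_{H,B}$.

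Now expand the Lagrangian at the fixed multiplier $\Omega$. For any feasible $V$, $\mathcal L_{H,B}(V,\Omega)=f_{H,B}(V)$. Since $u_jK_j=a_jb_j$, I may take $c_j=u_j$ in~\eqref{eq:weighted-square}: the identity holds for any $c_j$ with $c_jK_j=a_jb_j$, because the squared term only depends on $c_jK_j^{1/2}$ modulo $\ker K_j$. This yields
\[
f_{H,B}(V)-f_{H,B}(U)=\sum_j (v_j-u_j)K_j(v_j-u_j)^*\ge0.
\]
Equivalently, one can expand $f_{H,B}(V)+\langle\Omega,\operatorname{Im}(V^*V)-J_m\rangle_F$ directly around $U$. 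The linear term vanishes by stationarity, and the quadratic term is $\operatorname{tr}((V-U)^*H(V-U))+\operatorname{tr}((V-U)i\Omega(V-U)^*)$. So $U$ is a global minimizer.

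For the second claim, let $U$ be first-order stationary but not globally minimizing. If $\|\Omega\|_2\le\alpha$, the argument above would make $U$ global. Hence $\|\Omega\|_2>\alpha$, and Proposition~\ref{prop:weighted-curvature} yields minimum curvature $2(\alpha-\|\Omega\|_2)<0$, attained by the rank-one tangent direction constructed there.

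The only delicate step is justifying the completed square at the stationary point when some $K_j$ are singular. The direct expansion around $U$ avoids pseudoinverses and is the version I would write out.
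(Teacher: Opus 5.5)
Your proposal is correct and follows essentially the same route as the paper. The second-order condition with Proposition~\ref{prop:weighted-curvature} gives $\|\Omega\|_2\le\alpha$, and stationarity with the expansion of the Lagrangian at the fixed multiplier gives exactly~\eqref{eq:weighted-difference}; the contrapositive plus the rank-one attaining direction then yields the negative-curvature claim. Your two extra details fill in steps the paper leaves implicit: taking $c_j=u_j$ is legitimate because the squared term depends on $c_j$ only modulo the left null space of $K_j$, and the pseudoinverse-free direct expansion handles singular $K_j$.
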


\begin{proof}
The second-order condition and Proposition~\ref{prop:weighted-curvature} give \(\|\Omega\|_2\le\alpha\).
Together with stationarity, the square-completion identity gives, for every feasible \(V\),
\begin{equation}\label{eq:weighted-difference}
 f_{H,B}(V)-f_{H,B}(U)
 =\operatorname{tr}\bigl((V-U)^*H(V-U)\bigr)
   +\operatorname{tr}\bigl((V-U)i\Omega(V-U)^*\bigr)\ge0.
\end{equation}
A nonglobal first-order stationary point must therefore satisfy \(\|\Omega\|_2>\alpha\);
the exact curvature formula and its attaining direction give strictly negative curvature.
\end{proof}

The next example shows that arbitrarily small perturbations of the identity weight can destroy the property that strong duality holds for every target.
\begin{example}[Arbitrarily small anisotropic perturbations]\label{ex:anisotropy}
Take \(n=3,k=2\), order the columns in symplectic pairs, and use \(B_0\) from \eqref{eq:gap-base-data}. Set
\[
 H_\varepsilon=\operatorname{diag}(1,1,1+\varepsilon),\qquad
 B=\begin{bmatrix}B_0\\0\end{bmatrix},\qquad \varepsilon>0.
\]
Here \(\alpha=1\) and \(d=2<m-1=3\).
By \eqref{eq:weighted-dual-scaling} and the necessity proof above, the dual value is \(18\) and the primal value is strictly larger.
Take the three-row feasible matrix
\begin{equation}\label{eq:gap-base-lift}
 \widehat U=\begin{bmatrix}
 2&0&1&i\\2i&-2&0&1\\\sqrt3&-i\sqrt3&0&0
 \end{bmatrix}.
\end{equation}
Its unweighted objective value is $18$, and its third row has squared norm $6$. Hence
\begin{equation}\label{eq:anisotropy-gap}
 18<\min_{U\in\mathcal M_{3,2}}f_{H_\varepsilon,B}(U)
       \le18+6\varepsilon.
\end{equation}
Since \(\|H_\varepsilon-I\|_2=\varepsilon\), arbitrarily small anisotropic perturbations can destroy the property that strong duality holds for every target at the unweighted critical dimension.
\end{example}

The necessity in Theorem~\ref{thm:weighted} concerns strong duality for all targets, not the global optimality of local minima.
For example, every local minimum of the symplectic trace problem \(B=0\) is global
\cite[Theorem~4.6, Proposition~4.11]{SonAbsilGaoStykel2021},
yet a strict duality gap occurs when \(d<k\).

\label{end:weighted}

\section{Numerical experiments}\label{sec:numerical}
The dual formulation offers a way to reduce the size of the matrices updated during optimization.
Primal methods iterate on a $2n\times2k$ matrix, whereas the multiplier $\Omega$ is a $2k\times2k$ skew-symmetric matrix.
After the primal variable is eliminated, the target enters the dual objective through the Gram matrix $B^*B$ of the same order; see~\eqref{eq:dual-value}.
Thus, when $n$ is large relative to $k$, the main iterative state requires $O(k^2)$ storage rather than $O(nk)$.
The full target is still used to form the Gram matrix and recover the primal solution.

The smaller iterative state also makes it practical to solve many independent targets together.
Targets of the same size have separate Gram matrices and multipliers, whose updates can be carried out concurrently using batched matrix operations on the GPU.
We examine these computational benefits at dimensions for which strong duality holds.
The comparison with two Riemannian methods measures the complete cost of a single-target solve, while the comparison between sequential and batched dual solves measures the additional benefit of parallelism across targets.

\subsection{Experimental setup}\label{subsec:numerical-methods}\label{subsec:numerical-validation}
The dual method, denoted Dual, maximizes $d_B$ by minimizing the equivalent real objective
\[
 h_0(\Omega)=\operatorname{tr}(C_rP-C_iQ)+\langle\Omega,J_m\rangle_F
             =\|B\|_F^2-d_B(\Omega),
\]
where $C_r+iC_i=B^*B$, $P=(I_m+\Omega^2)^{-1}$, and $Q=-\Omega P$.
We use Newton's method with preconditioned conjugate gradients, starting from $\Omega=0$.
Backtracking keeps $I_m+i\Omega\succ0$ and decreases $h_0$; the primal variable is recovered as $U=B(P+iQ)$.
The tolerance is $5\times10^{-9}$, with at most $100$ iterations.

The first-order baseline, denoted RSD, is Gao's Euclidean-metric method with Cayley retraction and nonmonotone line search \cite{GaoSonAbsilStykel2021,GaoEuclidean2021,GaoSpoptCode}; our port retains its step and line-search rules.
We also compare with the trust-region method of Jensen and Zimmermann \cite{JensenZimmermann2024,JensenZimmermannCode2024}, using the quotient metric induced by a right-invariant metric on the symplectic group and their retraction formed from two Cayley factors.
Their R--TR1 variant uses the exact Riemannian Hessian, whereas R--TR2 projects the ordinary directional derivative of the Riemannian gradient onto the tangent space.
We use R--TR2, the default in the authors' public code, with its original Hessian approximation and trust-region updates.
We ported the original MATLAB/Manopt implementation \cite{BoumalMishraAbsilSepulchre2014} to Python and made the necessary computational optimizations.

Both Riemannian methods start from $U_0=[Q_0,iQ_0]$, where $Q_0$ is an orthonormal polar factor of $B_1-iB_2$ for $B=[B_1,B_2]$.
They use the common residual tolerances below, with at most $1000$ iterations or $60$ seconds per target.
All three methods use the same GPU and double precision.
Solve times include initialization, data transfers, and recovery of the output, but exclude target generation and the independent accuracy checks.

Parallel computation uses an RTX 3060 with $6$ GB of GPU memory; the CPU is an Intel Core i7-12700H.\footnote{The code is publicly available at \url{https://github.com/yulinxie20/symplectic-stiefel-nearness}.}

We use all six combinations of $n\in\{2048,4096\}$ and $k\in\{16,32,48\}$.
For each size, we generate $64$ targets with independent $\mathcal N(0,1/(2n))$ entries.
We compare the single-target solve times of Dual, RSD, and R--TR2, and the total times for sequential and batched solves with Dual on the same targets.
Each experiment is repeated three times, and running times are reported as arithmetic means.
Speedups are ratios of the corresponding mean times.

For a computed $X$, let $\Omega_{\rm p}$ be its projection multiplier and evaluate
\begin{equation}\label{eq:numerical-residuals}
 r_{\rm F}=\frac{\|X^TJ_{2n}X-J_m\|_F}{\sqrt m},\qquad
 r_{\rm G}=\frac{\|X-T-J_{2n}X\Omega_{\rm p}\|_F}{\max\{1,\|T\|_F\}}.
\end{equation}
Here $r_{\rm F}$ checks the symplectic constraint, and $r_{\rm G}$ checks the first-order optimality condition.
We also compute
\begin{equation}\label{eq:numerical-gap}
 \delta=\frac{|f(X)-d_B(\Omega_{\rm p})|}{\max\{1,|f(X)|\}},
\end{equation}
which measures the difference between the objective value and the dual value.
A solve is successful if it terminates normally and satisfies $r_{\rm F}\le10^{-8}$, $r_{\rm G}\le10^{-9}$, and $\delta\le10^{-8}$.

\subsection{Experimental results}\label{subsec:numerical-random}\label{subsec:numerical-batch}
Figure~\ref{fig:numerical-comparison} summarizes both comparisons and shows variation across the three rounds; exact values are listed in Tables~\ref{tab:numerical-time} and~\ref{tab:numerical-batch}.

\begin{figure}[htbp]
\centering
\includegraphics[width=\textwidth]{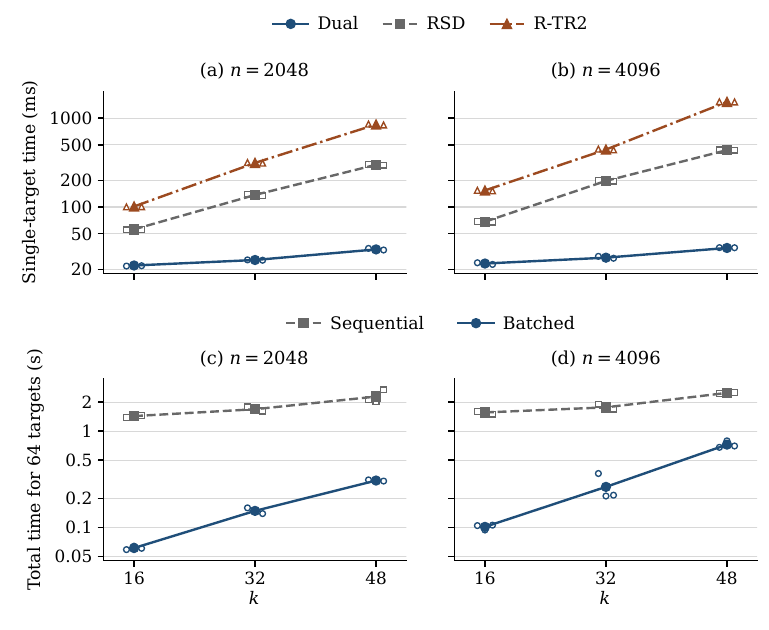}
\caption{Single-target times (top, milliseconds) and total times for $64$ targets (bottom, seconds). Large markers and lines show three-round arithmetic means. Small open markers show each round's values (means over $64$ targets in the top row) and are displaced horizontally for visibility. Time axes are logarithmic.}\label{fig:numerical-comparison}
\end{figure}
\FloatBarrier

Table~\ref{tab:numerical-time} lists the mean single-target solve times of Dual, RSD, and R--TR2.

\begin{table}[htbp]
\centering\small
\setlength{\tabcolsep}{3.5pt}
\caption{Mean single-target times in milliseconds. Success counts are listed in the order Dual/RSD/\mbox{R--TR2}, each out of $192$.}\label{tab:numerical-time}
\begin{tabular}{rrrrrrrr}
\toprule
$n$ & $k$ & Dual & RSD & R--TR2 & RSD/Dual & R--TR2/Dual & Successes\\
\midrule
2048 & 16 & 22.0 & 56.1 & 101.5 & 2.55 & 4.61 & 192/192/192 \\
2048 & 32 & 25.4 & 136.8 & 313.7 & 5.39 & 12.36 & 192/192/192 \\
2048 & 48 & 33.4 & 298.9 & 847.8 & 8.95 & 25.39 & 192/192/192 \\
4096 & 16 & 23.2 & 68.4 & 153.7 & 2.95 & 6.64 & 192/192/192 \\
4096 & 32 & 27.0 & 196.7 & 443.1 & 7.30 & 16.44 & 192/192/192 \\
4096 & 48 & 34.7 & 436.6 & 1522.2 & 12.60 & 43.93 & 192/192/192 \\
\bottomrule
\end{tabular}
\end{table}
\FloatBarrier
On these six sizes, Dual is $2.55$--$12.60$ times faster than RSD and $4.61$--$43.93$ times faster than R--TR2.
All $3456$ single-target solves terminate normally and pass the accuracy checks.
The largest values of $r_{\rm F}$, $r_{\rm G}$, and $\delta$ are $3.76\times10^{-9}$, $9.92\times10^{-10}$, and $2.88\times10^{-9}$, respectively.
As shown in the top row of Figure~\ref{fig:numerical-comparison}, Dual's time grows more slowly than those of the two Riemannian methods over the tested values of $k$, so the speedup increases at each fixed $n$.

Table~\ref{tab:numerical-batch} compares the mean total times for sequential and batched solves with Dual.

\begin{table}[htbp]
\centering\small
\setlength{\tabcolsep}{5pt}
\caption{Mean total times in seconds for the same $64$ targets. The last column gives the numbers of successful sequential/batched runs, each out of $3$.}\label{tab:numerical-batch}
\begin{tabular}{rrrrrr}
\toprule
$n$ & $k$ & Sequential & Batched & Speedup & Successes\\
\midrule
2048 & 16 & 1.433 & 0.061 & 23.53 & 3/3 \\
2048 & 32 & 1.689 & 0.148 & 11.39 & 3/3 \\
2048 & 48 & 2.286 & 0.306 & 7.46 & 3/3 \\
4096 & 16 & 1.564 & 0.101 & 15.46 & 3/3 \\
4096 & 32 & 1.767 & 0.263 & 6.71 & 3/3 \\
4096 & 48 & 2.497 & 0.724 & 3.45 & 3/3 \\
\bottomrule
\end{tabular}
\end{table}
All $36$ runs pass the accuracy checks for every output.
Batching reduces the total solve time by a factor of $3.45$--$23.53$.
In the bottom row of Figure~\ref{fig:numerical-comparison}, the relative separation between the batched and sequential curves narrows as $k$ increases: batching provides less additional speedup as the work per problem grows.
One possible explanation is that larger individual problems make fuller use of GPU parallelism, leaving less room for batching to improve resource utilization.
\label{end:numerical}
\FloatBarrier

\section*{Conclusions}\phantomsection\label{sec:discussion}
\addcontentsline{toc}{section}{Conclusions}
We establish the exact threshold $n\ge2k-1$ for strong duality with arbitrary targets in the nearest rectangular symplectic matrix problem and clarify its connection to global optimality.
Within this range, the exact minimum-curvature formula explains geometrically why every second-order stationary point is a global minimizer, while the global objective error bounds quantify the suboptimality of feasible approximate solutions.
The extension to Hermitian positive definite row weights shows that universal strong duality is governed by the complex multiplicity of the smallest eigenvalue.
The unweighted dimension threshold follows from this criterion when the weight is the identity.

The dual formulation also yields a numerical method for the nearest rectangular symplectic matrix problem.
Across the six tested sizes and under the same accuracy requirements, its mean solve time is shorter than those of both the Riemannian first-order method and R--TR2 used for comparison.
For the same targets, batching the dual method further reduces the total time relative to sequential solves.
These experiments complement the global optimality analysis by showing that the same dual formulation can also improve computational efficiency for this class of matrix nearness problems.
\label{end:conclusion}

\section*{Declaration of AI-assisted work}
ChatGPT was used for language editing of the manuscript and to assist with code writing.

\section*{Acknowledgements}
This work was supported by the National Natural Science Foundation of China (Grant No.\ 92470119).
Yulin Xie acknowledges financial support from the China Scholarship Council (CSC).

\Needspace{5\baselineskip}
\phantomsection\label{sec:references}
\interlinepenalty=10000

\label{end:references}
\end{document}